\documentclass[a4paper,reqno,oneside]{amsart}
\usepackage[
    style=numeric,
    sorting=nty,
    maxnames=99,
    maxalphanames=5,
    sortcites,
    url=false]{biblatex}
\usepackage[british]{babel}
\usepackage{csquotes}
\numberwithin{equation}{section}
\usepackage{amssymb}
\usepackage[protrusion=true,expansion=true]{microtype}
\usepackage{amsmath,amsfonts,amsthm}
\usepackage[pdftex]{graphicx}
\usepackage{url}
\usepackage{svg}
\usepackage[title,titletoc,page]{appendix}
\usepackage{listings}
\usepackage{color}
\usepackage{caption}
\usepackage{subcaption}
\usepackage{dsfont} 
\usepackage{bm}
\usepackage{upgreek}
\usepackage[normalem]{ulem}
\usepackage{enumerate}
\usepackage{mathtools}
\usepackage{lipsum}
\usepackage{adjustbox}
\usepackage{tikz}

\usepackage[textsize=scriptsize]{todonotes}

\usepackage{circuitikz}
\usepackage{tcolorbox}
\usetikzlibrary{tikzmark,calc}
\usepackage{xargs}      
\usepackage{algorithm}
\usepackage{algpseudocode}
\usepackage{nicematrix}
\usepackage[T1]{fontenc} 
\usepackage{lmodern} 
\rmfamily
\DeclareFontShape{T1}{lmr}{b}{sc}{<->ssub*cmr/bx/sc}{}
\DeclareFontShape{T1}{lmr}{bx}{sc}{<->ssub*cmr/bx/sc}{}
\usepackage[left=2.4cm,right=2.4cm,top=2.5cm,bottom=2.5cm,footskip=1.0cm,headsep=1cm,marginparwidth=1.9cm]{geometry}
\usepackage[%
pdfauthor={Yannick De Bruijn, Erik Orvehed Hiltunen},%
pdftitle={Numerical Toeplitz Symmetriser},
pdfsubject={Numerical Toeplitz Symmetriser},
pdfkeywords={banded Toeplitz matrices, generalised Brillouin zone, non-Bloch band theory, non-Hermitian
skin effect, eigenvalue asymptotics, spectral distance}]{hyperref}
\usepackage{orcidlink}
\usepackage{fancyhdr}
\fancypagestyle{plain}{%
\fancyhead[C]{}
\fancyfoot[C]{}

}
\fancypagestyle{nosection}{%
\fancyhead[CE]{}
\fancyhead[CO]{}
\fancyfoot[CE,CO]{\thepage}
}
\numberwithin{equation}{section}		
\numberwithin{figure}{section}			
\numberwithin{table}{section}			

\newcommand{\vect}[1]{\boldsymbol{\mathbf{#1}}}

\newcommand{\Cb}{\mathbb{C}}
\DeclareMathOperator{\C}{\mathbb{C}}

\newcommand{\N}{\mathbb{N}}
\renewcommand{\O}{\mathcal{O}}

\newcommand{\R}{\mathbb{R}}

\newcommand{\Z}{\mathbb{Z}}

\renewcommand{\epsilon}{\varepsilon}
\newcommand{\dx}{\: \mathrm{d}}

\renewcommand{\hat}{\widehat}
\renewcommand{\i}{\mathrm{i}}
\renewcommand{\d}{\mathrm{d}}
\renewcommand{\Im}{\mathfrak{Im}}

\newcommand{\svdots}{\raisebox{3pt}{\scalebox{.6}{$\vdots$}}}
\newcommand{\sddots}{\raisebox{3pt}{\scalebox{.6}{$\ddots$}}}

\newcommand{\sdots}{\raisebox{3pt}{\scalebox{.6}{$\dots$}}}

\usepackage[noabbrev,capitalize]{cleveref}
\crefname{proposition}{Proposition}{Propositions}
\crefname{equation}{}{}
\newtheorem{theorem}{Theorem}[section]
\newtheorem{lemma}[theorem]{Lemma}

\theoremstyle{definition}
\newtheorem{definition}[theorem]{Definition}

\newtheorem{assumption}[theorem]{Assumption}
\newtheorem{remark}[theorem]{Remark}
\crefname{assumption}{Assumption}{Assumptions}
\crefname{definition}{Definition}{Definitions}
\crefname{corollary}{Corollary}{Corollaries}
\crefname{enumi}{item}{items}
\begin{document}
\title[Non-Bloch band theory for finite banded non-Hermitian Toeplitz matrices]{Non-Bloch band theory for finite banded non-Hermitian Toeplitz matrices}

\author[Y. de Bruijn]{Yannick de Bruijn$^*$ \orcidlink{0009-0009-6194-0761}}
\address{\parbox{\linewidth}{Yannick de Bruijn\\
Department of Mathematics, University of Oslo, Moltke Moes vei 35, 0851 Oslo, Norway.
\\
\href{https://orcid.org/0009-0009-6194-0761}{orcid.org/0009-0009-6194-0761}}}
\email{yannicd@math.uio.no}

\author[E. O. Hiltunen]{Erik Orvehed Hiltunen \orcidlink{0000-0003-2891-9396}}
\address{\parbox{\linewidth}{Erik Orvehed Hiltunen\\
Department of Mathematics, University of Oslo, Moltke Moes vei 35, 0851 Oslo, Norway. \\
\href{http://orcid.org/0000-0003-2891-9396}{orcid.org/0000-0003-2891-9396}.}}
\email{erikhilt@math.uio.no}

\begin{abstract}
Non-Bloch band theory recovers the open-boundary spectrum of a non-Hermitian chain from its Bloch Hamiltonian evaluated along the generalised Brillouin zone rather than the unit circle. We develop its finite-size counterpart for banded Toeplitz matrices with real open-boundary spectrum. Reality of the spectrum forces the generalised Brillouin zone to be parametrised by an analytic polar curve, so the known sufficient condition for a real spectrum is also necessary under some mild non-degeneracy conditions. At finite size $n$, the eigenvalues of the original and the symmetrised Bloch Hamiltonians are given by samples at $n$ quantised momenta from the same limiting band, although separated by a phase mismatch of order $1/n$ which we obtain in closed form. Iterating this correction $K$ times yields a quantisation rule on the generalised Brillouin zone reproducing the finite spectrum with an error $\mathcal{O}(n^{-K-1})$ for any fixed $K$. At finite size, the generalised Brillouin zone itself is unchanged and only its parametrisation depends on $n$.
\end{abstract}
\maketitle
\vspace{3mm}
\noindent
\textbf{Mathematics Subject Classification (MSC2020):} 47B35,   
15B05,   
15A18,   
41A25,   
65F15,   
81Q12.   

\vspace{3mm}
\noindent
\textbf{Keywords:} banded non-Hermitian Toeplitz matrices, generalised Brillouin zone, non-Bloch band theory, non-Hermitian skin effect, eigenvalue asymptotics, spectral distance.
\vspace{5mm}

\section{Introduction}

This paper is devoted to the study of the asymptotic spectra of $m$-banded Toeplitz matrices and constitutes a continuation of the work presented in \cite{GBZfoundation}.
Non-Bloch band theory has attracted considerable interest because it makes non-reciprocal systems easier to handle \cite{NonBlochBandTheory}, and particular attention has been paid to relating open to periodic boundary conditions. These descriptions, however, concern genuinely infinite structures, whereas every experimental system is necessarily finite. It is therefore not clear what the theory predicts for a finite chain, nor how it converges to its infinite counterpart.

The generalised Brillouin zone (GBZ) has demonstrated itself a useful tool to study the localisation in periodic media \cite{DEBRUIJN2026110014, debruijn2024complexbandstructuresubwavelength}, and in the case of tridiagonal $k$-Toeplitz matrices \cite{ammari2024spectra, debruijn2025complexbandstructurelocalisation}.
It has long been known that a class of banded non-Hermitian Toeplitz matrices possesses real spectrum, and sufficient conditions for reality were given in \cite{GBZfoundation}. There it was also shown that the open limit can be related to periodic boundary conditions by evaluating the symbol along a curve in the complex plane, the generalised Brillouin zone, instead of along the unit circle. Moreover, it was shown that the generalised Brillouin zone preserves the density of states, that is the asymptotic eigenvalue distribution. On a Toeplitz matrix level, the generalised Brillouin zone produces an asymptotic similarity transform of the original, non-Hermitian matrix to a symmetric Toeplitz matrix. However, this is a statement about the limit alone, and not much has been said about eigenvalues of corresponding finite systems.

This manuscript closes that gap by quantifying how the generalised Brillouin zone acts on the finite matrix level and how it preserves the eigenvalues pointwise. Indeed, the eigenvalues of the original problem and the transformed one, though their generalised Brillouin zones agree, are sampled differently from these bands. Using the recent results of \cite{BogoyaGrudsky2025, BOGOYA_SLM}, we correct the parametrisation of the generalised Brillouin zone such that the sampling before and after the transform agree.
Iterating the correction improves the pointwise eigenvalue error to any polynomial order. For every fixed order $K$ and all large enough $n$  there exists a reparametrisation of the generalised Brillouin zone reproducing the finite non-reciprocal spectrum with pointwise error $\mathcal{O}(n^{-K-1})$.
The generalised Brillouin zone is thus rigid at finite size and only the momenta at which it is sampled are size-dependent.

This manuscript is organised as follows. We start in Section \ref{Sec: Toeplitz Theory} by recalling the known results on Toeplitz theory and by stating the standing assumption of the manuscript. Moreover, we provide a proof that, under the assumptions given, the reality criterion from \cite{GBZfoundation} is indeed optimal. In Section \ref{sec:sharp} we will recall the pointwise eigenvalue characterisation given by \cite{BogoyaGrudsky2025}, and use these results to reparametrise the generalised Brillouin zone allowing for pointwise eigenvalue convergence. We conclude the section by presenting an iteration which achieves tangential corrections of the generalised Brillouin zone so that the eigenvalues converge pointwise at any arbitrary polynomial order. In Section \ref{Sec: Physical Applications} we discuss the implications for condensed-matter physics and motivate the finite size non-Bloch band theory.
The Matlab code supporting the findings of this article is available in a repository linked in Section \ref{Sec: Data availability}.

\section{Spectra of banded Toeplitz matrices}\label{Sec: Toeplitz Theory}
An $m$-banded Toeplitz operator is generated by the finite Laurent polynomial,
\begin{equation}\label{eq: def symbol function banded}
    f_m(z) = \sum_{k = - m}^m a_k z^{-k}, \quad a_{\pm m} \neq 0,
\end{equation}
for a sequence $\{a_k\}_{k\in \Z}\subset \R$ such that the $m$-banded Toeplitz operator is given by the semi-infinite matrix,
\begin{equation} \label{eq: def banded Toeplitz Operator}
    \mathbf{T}(f_m) = \left(\begin{smallmatrix} 
        a_0     & a_{-1}  & \sdots & a_{-m}  &   \\
        a_1     & \sddots & \sddots &       & \sddots \\
        \svdots & \sddots & \sddots & \sddots &  \\
        a_m     &         &         &         &  \\
                & \sddots &         &         &         
    \end{smallmatrix}\right).
\end{equation}
To define a finite Toeplitz matrix, we introduce the projection operator,
\begin{align}
    \vect{P}_n: \ell^2(\N, \mathbb{C}) &\to \ell^2(\N, \mathbb{C})\\
    (x_1, x_2, x_3, \dots) &\mapsto (x_1, \dots, x_n, 0, 0, \dots).
\end{align}
A finite Toeplitz matrix is derived from a Toeplitz operator by performing the following truncation,
\begin{equation}\label{eq: truncate operator to matrix}
    \vect{T}_{n}(f) := \vect{P}_{n} \vect{T}(f) \vect{P}_{n}.
\end{equation}
Mathematically, the open limit is characterised as the smallest set on which the eigenvalues will cluster as the matrix dimension becomes large.
The limiting spectrum of a finite but growing Toeplitz matrix will be referred to as the \emph{open limit} and is defined as
\begin{equation}\label{def: open spectrum}
    \sigma_{\mathrm{open}}(f_m) := \lim_{n \to\infty}\sigma\bigl(\mathbf{T}_n(f_m)\bigr),
\end{equation}
where the limit is taken in the Hausdorff sense.
A characterisation of the open limit that does not rely on computing any eigenvalues, but solely on the symbol function, was given in \cite{Schmidt_Spitzer_1960}. For the convenience of the reader we recall the result.

\begin{theorem}[Schmidt-Spitzer]\label{thm: schmidt spitzer magnitude}
    Let $f_m$ be a Laurent polynomial as defined in \eqref{eq: def symbol function banded}, then
    \begin{align}
        \lim_{n \to\infty}\sigma\bigl(\mathbf{T}_n(f_m)\bigr) &= \bigcap_{r \in (0, \infty)} \sigma\Bigl(\mathbf{T}\bigl(f_m(r\mathbb{T})\bigr)\Bigr)\\
        &= \bigl\{ \lambda \in \Cb ~:~ |z_{m}(\lambda)| = |z_{m+1}(\lambda)| \bigr\} \label{eq: modulus definition},
    \end{align}
    where $z_i(\lambda)$ are the roots of the polynomial $z^m\bigl(f_m(z)-\lambda\bigr)$ sorted in ascending magnitude, that is, $0< |z_1(\lambda)| \leq |z_2(\lambda)| \leq \dots \leq |z_{2m}(\lambda)|$.
\end{theorem}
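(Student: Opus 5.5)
The plan follows the classical Schmidt--Spitzer strategy: first reduce the problem to the case $r=1$ by a similarity transformation, then characterise the limit set through the root moduli.

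\textbf{Step 1: rescaling.} Let $\mathbf{D}_r=\mathrm{diag}(r,r^2,\dots,r^n)$. A direct computation gives $\mathbf{D}_r^{-1}\mathbf{T}_n(f_m)\mathbf{D}_r=\mathbf{T}_n(f_m(r\,\cdot))$, where $f_m(r\,\cdot)$ denotes the symbol $z\mapsto f_m(rz)$. This is a similarity, so $\sigma(\mathbf{T}_n(f_m))=\sigma(\mathbf{T}_n(f_m(r\,\cdot)))$ for every $r>0$ and every $n$. The finite spectra therefore do not see $r$, whereas the spectrum of the semi-infinite operator does.

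\textbf{Step 2: the operator spectra via Gohberg's index theorem.} The Gohberg index theorem states that $\sigma(\mathbf{T}(f_m(r\,\cdot)))$ consists of the curve $f_m(r\mathbb{T})$ together with all $\lambda$ having nonzero winding number $\mathrm{wind}(f_m(r\,\cdot)-\lambda,0)$. By the argument principle, this winding number equals $\#\{i: |z_i(\lambda)|<r\}-m$. It is zero for some $r$ exactly when $r$ can be placed strictly between $|z_m(\lambda)|$ and $|z_{m+1}(\lambda)|$; if $|z_m(\lambda)|=|z_{m+1}(\lambda)|$, no such $r$ exists. This yields the second equality: $\lambda$ lies outside $\bigcap_r\sigma(\mathbf{T}(f_m(r\mathbb{T})))$ if and only if $|z_m(\lambda)|<|z_{m+1}(\lambda)|$.

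\textbf{Step 3: the inclusion $\lim\sigma\subseteq\Lambda$.} Write $\Lambda$ for the set $\{|z_m|=|z_{m+1}|\}$. Take $\lambda$ with $|z_m|<r<|z_{m+1}|$. Then $\mathbf{T}(f_m(r\,\cdot))-\lambda$ is invertible. Moreover, its Wiener--Hopf factors are invertible banded triangular Toeplitz operators, so the finite sections $\mathbf{T}_n(f_m(r\,\cdot))-\lambda$ are uniformly invertible for large $n$. This uses the Baxter--Gohberg--Feldman stability of the finite section method, and it holds uniformly on a neighbourhood of $\lambda$. Combined with Step 1, no eigenvalue of $\mathbf{T}_n(f_m)$ accumulates at $\lambda$.

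\textbf{Step 4: the reverse inclusion, the main obstacle.} It remains to show that every $\lambda\in\Lambda$ is a limit of eigenvalues. I will use the Widom formula for $\det(\mathbf{T}_n(f_m)-\lambda)$. For $\lambda$ with distinct roots, the determinant equals $\sum_{M}C_M(\lambda)\,w_M(\lambda)^n$, where the sum runs over $m$-subsets $M$ of the roots and $w_M=(-1)^m a_{-m}\prod_{i\in M}z_i^{-1}$ up to normalisation. The dominant term has $M=\{1,\dots,m\}$ with $C_M\neq0$. On $\Lambda$ at least two terms have maximal modulus, obtained by swapping $z_m$ and $z_{m+1}$. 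The delicate point is to exclude the degenerate situation in which these two competing terms cancel identically near $\lambda$, and to handle branch points.

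I would argue by contradiction. Suppose a small disc $U$ around $\lambda\in\Lambda$ contains no eigenvalues for all large $n$. Then $\frac1n\log|\det(\mathbf{T}_n-\mu)|$ is harmonic on $U$. Using Widom's formula, it converges to $\log|w_{\{1..m\}}(\mu)|$, i.e.\ to $\log|a_{-m}|+\sum_{i>m}\log|z_i(\mu)|$ with a suitable normalisation, which is subharmonic. Harmonic limits are harmonic. However, $\sum_{i>m}\log|z_i|$ fails to be harmonic across the curve where $|z_m|=|z_{m+1}|$: its Laplacian is a positive measure on $\Lambda$, because $\log\max(|z_m|,|z_{m+1}|)$ has a kink there. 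This contradiction gives $\lambda\in\lim\sigma$.

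Verifying this non-harmonicity requires $\Lambda\cap U$ to contain an arc on which $z_m\neq z_{m+1}$ locally. Such arcs exist because $\Lambda$ is a finite union of analytic arcs and the exceptional points are finitely many. Points of $\Lambda$ not on these arcs are handled by closedness of the limit set.
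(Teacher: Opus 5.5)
The paper does not prove this statement. It is quoted from \cite{Schmidt_Spitzer_1960} for the reader's convenience, so there is no in-paper argument to compare against. Your outline is the classical proof (as in \cite{BoettcherGrudsky2005}). Steps~1--3 are correct: the diagonal similarity, Gohberg's index theorem with Coburn's lemma for the second equality, and Gohberg--Feldman stability plus a Neumann series for $\limsup_n\sigma(\mathbf{T}_n(f_m))\subseteq\Lambda$.

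Step~4 is not yet a proof, for three reasons.

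\textbf{Subsequences.} Hausdorff convergence requires every $\lambda\in\Lambda$ to be approached by eigenvalues of $\mathbf{T}_n(f_m)$ for \emph{all} large $n$. Your hypothesis that $U$ contains no eigenvalues for all large $n$ only gives $\Lambda\subseteq\limsup$. Assume instead that $U$ is eigenvalue-free for $n$ in some infinite set $N$, and run the argument along $N$.

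\textbf{Mode of convergence.} Widom's formula gives $h_n(\mu):=\frac1n\log|\det(\mathbf{T}_n(f_m)-\mu)|\to u(\mu):=\log|a_{-m}|+\sum_{i>m}\log|z_i(\mu)|$ only pointwise, off $\Lambda$ and off the finitely many branch points. On $\Lambda$ terms of equal modulus compete, so ``harmonic limits are harmonic'' does not yet apply. Use Harnack's principle instead. Since $h_n\le\log(\sum_k|a_k|+|\mu|)$ and $h_n$ has a finite limit at some point, a subsequence converges locally uniformly on $U$ to a harmonic $H$. Then $H=u$ on $U$, because $\Lambda$ has empty interior and $u=\log|a_{-m}|+\max_{|M|=m}\sum_{i\in M}\log|z_i|$ is continuous.

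\textbf{Non-harmonicity near every point of $\Lambda$.} This is the substantive gap. Appealing to ``$\Lambda$ is a finite union of analytic arcs'' outsources a nontrivial structural theorem of the same theory; it is exactly what excludes interior points and isolated points. Moreover, your kink description $u=(\text{harmonic})+\log\max(|z_m|,|z_{m+1}|)$ presumes that only two roots lie on the critical circle. That fails identically whenever $f_m(z)=F(z^d)$ with $d\ge2$: for $f_2(z)=z^2+z^{-2}$ one has $\Lambda=[-2,2]$, and all four roots are unimodular there.

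A self-contained repair runs as follows.
\begin{enumerate}
\item At a non-branch point $\mu_1\in\Lambda$, label the roots analytically as $\xi_1,\dots,\xi_{2m}$. Then $u=\max_{|M|=m}h_M$ with each $h_M:=\log|a_{-m}\prod_{i\in M}\xi_i|$ harmonic.
\item If $u$ were harmonic near $\mu_1$, one of the finitely many closed sets $\{u=h_M\}$ would have interior, so $u\equiv h_M$ by unique continuation. Hence $|\xi_a|\ge|\xi_b|$ nearby for some $a\in M$, $b\notin M$ with equality at $\mu_1$.
\item The minimum principle then forces $\xi_a=c\,\xi_b$ with $|c|=1$ and $c\ne1$, since the roots are distinct. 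So $f_m(cz)\equiv f_m(z)$, i.e.\ $c^d=1$ for $d:=\gcd\{k:a_k\ne0\}$.
\item Replacing $f_m$ by $F$, where $f_m(z)=F(z^d)$, leaves $\Lambda$ and $u$ unchanged and makes $d=1$, a contradiction. The same argument shows that $\Lambda$ has empty interior.
\item Isolated points $\lambda_0\in\Lambda$ are excluded by the maximum principle. On a punctured disc around $\lambda_0$, the inner group $\{z_1,\dots,z_m\}$ is monodromy-invariant. Take an inner and an outer Puiseux cycle whose limits lie on $|z|=|z_m(\lambda_0)|$; these exist by continuity of $|z_m|$ and $|z_{m+1}|$. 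The difference of the averages of $\log|\cdot|$ over the two cycles is harmonic across $\lambda_0$, negative off $\lambda_0$, and zero at $\lambda_0$, which is impossible.
\end{enumerate}
Hence every point of $\Lambda$ is a limit of non-branch points of $\Lambda$. With these three repairs your Step~4 is complete.
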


The previous theorem motivates the following definition of the generalised Brillouin zone (GBZ) (see \cite{NonBlochBandTheory} for more details on the definition) based on the open limit
\begin{equation}\label{eq: point set roots}
 \mathbf{Z}(f_m):=\bigl\{z_m(\lambda),~z_{m+1}(\lambda)\in\Cb: |z_m(\lambda)|=|z_{m+1}(\lambda)|\ \text{for some }\lambda\in\R\bigr\} \subset \C.
\end{equation}

In the sequel, we will introduce some more structure on the geometry of the set $\mathbf{Z}(f_m)$. 

\begin{definition}\label{def: polar curve}
    We define a $C^1$ \emph{polar curve} as the set parametrised by polar coordinates such that
    \begin{equation}\label{eq: def eq of polar curve}
        p(e^{\i\theta}) = r(\theta) e^{\i \theta}, \quad \theta \in [-\pi, \pi],
    \end{equation}
    for some continuously differentiable function $r: [-\pi, \pi] \to (0, \infty)$, such that $r(-\theta) = r(\theta)$ and $r'(\pi) = r'(-\pi) =0$. For ease of notation, we will often denote the polar curve as $p(\mathbb{T})$, where $\mathbb{T}$ is the one-dimensional torus. If $r(\theta)$ is moreover real-analytic as a $2\pi$-periodic function, we call $p(\mathbb{T)}$ an \emph{analytic polar curve}.
\end{definition}

Although $\mathbf{Z}(f_m)$ in \eqref{eq: point set roots} is a point set without parametrisation, sweeping $\lambda$ across $\sigma_{\mathrm{open}}(f_m)$ provides a natural one. For $\lambda\in\R$ the central roots form a conjugate pair $\{\zeta(\lambda),\overline{\zeta(\lambda)}\}$, so for $\sigma_{\mathrm{open}}(f_m)=[A,B]$ the set $\mathbf{Z}(f_m)$ is traced out once by $\zeta(\lambda)$ as $\lambda$ increases from $A$ to $B$ and by $\overline{\zeta(\lambda)}$ on the way back. We say that $\mathbf{Z}(f_m)$ is \emph{spanned by the polar curve} $p(\mathbb{T})$ if this closed curve is $p(\mathbb{T})$, i.e.\ if $\zeta(\lambda)=p(e^{\i\alpha(\lambda)})$ for a monotone bijection $\alpha:[A,B]\to[0,\pi]$.
{\color{blue} }

For the remainder of the paper we make the following standing assumption, which can also be found in \cite{BogoyaGrudsky2025}.
\begin{assumption}\label{ass:standing}
    Let $f_m$ be as in \eqref{eq: def symbol function banded}, then we assume that for every $\lambda\in\sigma_{\mathrm{open}}(f_m)$ we have strict modulus separation, that is
    \begin{equation}\label{eq:strictsep}
     |z_{m-1}(\lambda)|<|z_m(\lambda)|=|z_{m+1}(\lambda)|<|z_{m+2}(\lambda)| ,
    \end{equation}
    and that $\sigma_{\mathrm{open}}(f_m)$ is not a single point. For $m = 1$ we use the convention that $|z_0| = 0$ and $|z_{m+2}| = \infty$.
\end{assumption}
From the point of view of the Floquet-Bloch band structure, Assumption~\ref{ass:standing} means that the band functions cannot have local maxima or minima in the interior of the GBZ. Such max/min are typically associated to a strong next-to-nearest coupling \cite{kazemi2023drawing}. In fact, as will be shown in Lemma~\ref{cor:band function}, Assumption~\ref{ass:standing} implies that the band functions are strictly monotone, and we restrict attention to such systems for the remainder of the work.

Under Assumption~\ref{ass:standing}, the central pair of roots is uniformly separated in modulus from the inner and outer groups of roots for all admissible $\lambda$. We quantify this by the \emph{confluent distance}, 
\begin{equation}\label{eq: confluent distance}
    \gamma := \min_{\lambda \in \sigma_{\mathrm{open}}(f_m)}
    \min\left\{
        \log\frac{|z_{m+2}(\lambda)|}{|z_{m+1}(\lambda)|}, 
        \log\frac{|z_{m}(\lambda)|}{|z_{m-1}(\lambda)|}
    \right\}.
\end{equation}

Under the standing assumption of strict middle root separation, i.e. $\gamma >0$, we have the following property of the symbol function.
\begin{lemma}\cite[Lemma 2.6.]{GBZfoundation}\label{lem: separation implies non confluence}
    Suppose that Assumption \ref{ass:standing} holds. Then $f_m'(z) \neq 0$ for every $z \in \mathbf{Z}(f_m)\setminus\R$, and $f''_m(z) \neq 0$ for every $z \in \mathbf{Z}(f_m)\cap\R$ such that $f_m'(z) = 0$.
\end{lemma}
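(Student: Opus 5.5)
The idea is to read off the conclusion from the root multiplicities of $z^m\bigl(f_m(z)-\lambda\bigr)$. Fix $z_0\in\mathbf{Z}(f_m)$. By definition $z_0$ is one of the central roots $z_m(\lambda_0)$ or $z_{m+1}(\lambda_0)$ for some $\lambda_0\in\sigma_{\mathrm{open}}(f_m)$, so $f_m(z_0)=\lambda_0$. The key observation is that $f_m'(z_0)=0$ holds exactly when $z_0$ is a multiple root of $P_{\lambda_0}(z):=z^m\bigl(f_m(z)-\lambda_0\bigr)$. Indeed, $P_{\lambda_0}'(z_0)=mz_0^{m-1}\bigl(f_m(z_0)-\lambda_0\bigr)+z_0^m f_m'(z_0)=z_0^mf_m'(z_0)$, and $z_0\neq 0$.

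\emph{Non-real points.} Let $z_0\notin\R$. Since the coefficients $a_k$ are real, the roots of $P_{\lambda_0}$ come in conjugate pairs, so $\overline{z_0}$ is also a root, with the same modulus and distinct from $z_0$. By Assumption~\ref{ass:standing}, exactly two roots, counted with multiplicity, lie on the circle $|z|=|z_m(\lambda_0)|$. These two are $z_0$ and $\overline{z_0}$, so each is a simple root. Hence $P_{\lambda_0}'(z_0)\neq0$, and therefore $f_m'(z_0)\neq 0$.

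\emph{Real points.} Let $z_0\in\R$ with $f_m'(z_0)=0$. Then $z_0$ is a root of multiplicity at least two, occupying both slots $z_m(\lambda_0)=z_{m+1}(\lambda_0)=z_0$. Suppose, for contradiction, that $f_m''(z_0)=0$ as well. Then $P_{\lambda_0}''(z_0)=z_0^mf_m''(z_0)=0$, so $z_0$ has multiplicity at least three. That puts at least three roots on the circle $|z|=|z_0|$, contradicting the strict separation $|z_{m-1}|<|z_m|=|z_{m+1}|<|z_{m+2}|$. Therefore $f_m''(z_0)\neq0$.

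The only delicate point is the bookkeeping for repeated roots. The ordering $|z_1|\le\dots\le|z_{2m}|$ counts multiplicity, so in \eqref{eq:strictsep} the strict inequalities say that exactly two roots, counted with multiplicity, have the central modulus. With that reading, both cases follow directly from the multiplicity identities above.
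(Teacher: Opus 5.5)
Your proof is correct and is the natural root-multiplicity argument one expects behind this lemma, which the paper does not reprove but only cites from \cite{GBZfoundation}. The identities $P_{\lambda_0}'(z_0)=z_0^m f_m'(z_0)$ and, when $f_m'(z_0)=0$, $P_{\lambda_0}''(z_0)=z_0^m f_m''(z_0)$ give both cases: a non-real $z_0$ and its conjugate already fill the two central slots allowed by \eqref{eq:strictsep}, so each is simple, while a real critical point with $f_m''(z_0)=0$ would be a triple root on the central circle. One step should be stated explicitly: the conjugate symmetry of the roots of $P_{\lambda_0}$ needs $\lambda_0\in\R$ as well as $a_k\in\R$, and this is guaranteed by the condition $\lambda\in\R$ in the definition \eqref{eq: point set roots} of $\mathbf{Z}(f_m)$.
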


 Moreover, a sufficient condition for the reality of the open limit can be derived from the generalised Brillouin zone.

\begin{theorem}\cite[Theorem 2.11.]{GBZfoundation}\label{Thm: real spectra criterion}
    Suppose that $\mathbf{Z}(f_m)$ is parametrised by a $C^1$ polar curve $p(\mathbb{T})$ as in \eqref{eq: def eq of polar curve} and that Assumption \ref{ass:standing} holds. Then it holds that
    \begin{equation}\label{eq: spectral limit}
        \lim_{n\to\infty} \sigma\bigl(\mathbf{T}_n(f_m)\bigr) = \bigcup_{\alpha \in[-\pi, \pi]} f_m\bigl(p(e^{\i\alpha})\bigr) \subset\R,
    \end{equation}
    where the limit is supposed to be understood in the Hausdorff sense.
\end{theorem}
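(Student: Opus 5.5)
Our plan is to go through the Schmidt--Spitzer characterisation in Theorem \ref{thm: schmidt spitzer magnitude} and the similarity $\mathbf{T}(f_m(r\,\cdot))$. The argument has two parts: the set equality and the reality of the right-hand side.

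\emph{The set equality.} By \eqref{eq: modulus definition}, $\lambda\in\sigma_{\mathrm{open}}(f_m)$ if and only if $|z_m(\lambda)|=|z_{m+1}(\lambda)|$. Take such a $\lambda$ with $\lambda\in\R$. The central roots are then $z_m(\lambda),z_{m+1}(\lambda)\in\mathbf{Z}(f_m)=p(\mathbb{T})$, so $\lambda=f_m(z_m(\lambda))=f_m(p(e^{\i\alpha}))$ for some $\alpha$. This gives "$\subseteq$" restricted to real $\lambda$.

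For the converse, fix $\alpha$ and set $z=p(e^{\i\alpha})\in\mathbf{Z}(f_m)$. By definition of $\mathbf{Z}(f_m)$, $z$ is a central root $z_m(\lambda)$ or $z_{m+1}(\lambda)$ for some real $\lambda$ with $|z_m|=|z_{m+1}|$. Since $f_m(z)=\lambda$, we get $f_m(p(e^{\i\alpha}))=\lambda\in\sigma_{\mathrm{open}}\cap\R$. This gives "$\supseteq$" and the reality of the union at the same time.

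\emph{Reality of the open limit.} It remains to show $\sigma_{\mathrm{open}}(f_m)\subset\R$, i.e.\ to rule out non-real $\lambda$ with $|z_m|=|z_{m+1}|$. This is the main obstacle. The plan is a connectedness and continuation argument.
\begin{enumerate}
\item Under Assumption \ref{ass:standing}, the open limit is a finite union of analytic arcs. Away from branch points, the two central roots $z_m(\lambda),z_{m+1}(\lambda)$ vary analytically in $\lambda$ (by the implicit function theorem, using $f_m'\neq0$ off $\R$ from Lemma \ref{lem: separation implies non confluence}). Strict separation \eqref{eq:strictsep} keeps them distinct from the other roots.
\item The map $\lambda\mapsto(z_m(\lambda),z_{m+1}(\lambda))$ is injective up to swapping: a point $z$ determines $\lambda=f_m(z)$.
\item Since $a_k\in\R$, we have $f_m(\bar z)=\overline{f_m(z)}$, so the root set is conjugation invariant. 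Let $\Lambda_{\R}=\sigma_{\mathrm{open}}\cap\R$. Its central roots already sweep out all of $\mathbf{Z}(f_m)=p(\mathbb{T})$, which is a closed simple curve.
\item Suppose a non-real arc of $\sigma_{\mathrm{open}}$ were attached at an endpoint or branch point of $\Lambda_{\R}$. Its central roots would trace a curve of common modulus. This curve either lies in $p(\mathbb{T})$ — impossible, since points of $p(\mathbb{T})$ already have real image $f_m(z)$ — or branches off $p(\mathbb{T})$. Branching off would force a confluence $f_m'(z)=0$ at a non-real point, or a degenerate real critical point with $f_m''=0$. Both are excluded by Lemma \ref{lem: separation implies non confluence}.
\end{enumerate}

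\emph{Completing the Hausdorff statement.} The open limit is connected: by Ullman's theorem for banded symbols, or via the intersection representation in Theorem \ref{thm: schmidt spitzer magnitude}. Connectedness prevents components of $\sigma_{\mathrm{open}}$ detached from $\Lambda_{\R}$. Hence $\sigma_{\mathrm{open}}=\Lambda_{\R}=f_m(p(\mathbb{T}))$.

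For the limit statement, no further work is needed beyond Theorem \ref{thm: schmidt spitzer magnitude}. The expected difficulty lies entirely in step 4: ruling out non-real branches emanating from the real interval. Our first attempt there is the local analysis at the junction point, using the $C^1$ regularity of $r$ and $r'(0)=r'(\pi)=0$ at the real points of $p(\mathbb{T})$, combined with the nonvanishing-derivative statements of Lemma \ref{lem: separation implies non confluence}.
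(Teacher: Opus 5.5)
The paper gives no proof of this theorem; it imports it from \cite{GBZfoundation}. Your reduction is sound. With $\mathbf{Z}(f_m)$ defined through real $\lambda$, the union $\bigcup_\alpha f_m(p(e^{\i\alpha}))$ is exactly $\sigma_{\mathrm{open}}(f_m)\cap\R$. Since $\sigma_{\mathrm{open}}(f_m)$ is connected, it then suffices to show that no $\lambda_0\in\sigma_{\mathrm{open}}(f_m)\cap\R$ is a limit of non-real points of $\sigma_{\mathrm{open}}(f_m)$.

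\textbf{The gap.} The gap is step 4, and the mechanism you propose there is false, not merely unfinished. Take a real $\lambda_0$ whose central pair $\{\zeta,\bar\zeta\}$ is non-real. Assumption~\ref{ass:standing} makes both roots simple and analytic near $\lambda_0$, and locally $\sigma_{\mathrm{open}}(f_m)=\{|\Phi|=1\}$ with $\Phi=z_m/z_{m+1}$. Non-real arcs leave $\lambda_0$ exactly when $\Phi'(\lambda_0)=0$, i.e.\ when $\zeta f_m'(\zeta)\in\R$, meaning the velocity $1/f_m'(\zeta)$ of the central root is radial. No roots collide; the conformal map $\lambda\mapsto z_m(\lambda)$ just carries the crossing into the $z$-plane. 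Lemma~\ref{lem: separation implies non confluence} already follows from Assumption~\ref{ass:standing} alone. So an argument resting on it never uses the polar-curve hypothesis, which is precisely what separates this theorem from the Jordan-curve criterion whose proof was withdrawn. Also, a junction could a priori sit anywhere on the real segment, not only where the curve meets the axis. Hence $r'(0)=r'(\pi)=0$ cannot settle it.

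\textbf{The missing idea.} The polar structure forces $\Phi'\neq0$. For real $\lambda$ near $\lambda_0$, write $\zeta(\lambda)=r(\alpha(\lambda))e^{\i\alpha(\lambda)}$ with $\alpha=\arg\zeta$ real-analytic. Differentiating $f_m(\zeta(\lambda))=\lambda$ gives $1/f_m'(\zeta)=\bigl(r'(\alpha)+\i r(\alpha)\bigr)e^{\i\alpha}\alpha'(\lambda)$. Hence $\alpha'\neq0$, because a $C^1$ polar curve never has a radial tangent ($r>0$). Therefore $\Phi'=2\i\alpha'\Phi\neq0$, and near $\lambda_0$ the set $\{|\Phi|=1\}$ is a single analytic arc, necessarily the real one. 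This is the converse of the argument in the proof of Theorem~\ref{thm: exact characterisation}.

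\textbf{Points with real central roots.} Here you must first exclude a simple real central root $x=p(e^{\i0})$. This works because the distinct roots $p(e^{\pm\i\theta})$ of $z^m(f_m(z)-\lambda(\theta))$ both tend to $x$ as $\theta\to0$, which a simple root cannot allow. So the pair is a double root $z_e$ with $f_m''(z_e)\neq0$. Write the central pair as $\mu\pm\eta$. Then $|\mu+\eta|=|\mu-\eta|$ if and only if $w:=\eta^2/\mu^2\in(-\infty,0]$. Since $w$ has real coefficients and $w'(0)=2/(f_m''(z_e)z_e^2)\neq0$, this set is locally a real half-interval.

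With both local statements, $\sigma_{\mathrm{open}}(f_m)\cap\R$ is relatively open and closed in $\sigma_{\mathrm{open}}(f_m)$, and your connectedness step completes the proof.
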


Theorem~\ref{Thm: real spectra criterion} might appear considerably weaker than the criterion originally conjectured in \cite{RealSpectrumToeplitz}, since it additionally requires the GBZ to be parametrised by a polar curve of class $C^1$, instead of the set $f_m^{-1}(\R)$ containing a Jordan curve winding around the origin. The latter criterion was stated as a theorem in \cite{RealSpectrumToeplitz}, but the proof was withdrawn in \cite{CorrectionRealAsymptoticSpectrum}; it has not been restored and counterexamples to it have since been announced.
However, the following theorem shows that, under Assumption~\ref{ass:standing}, these additional hypotheses are also necessary and the criterion of \cite{GBZfoundation} is optimal.

\begin{theorem}\label{thm: exact characterisation}
    Under Assumption \ref{ass:standing} it holds that
    \begin{equation}
        \sigma_{\mathrm{open}}(f_m) \subset \R \implies \text{GBZ is spanned by an analytic  polar curve.} 
    \end{equation}
\end{theorem}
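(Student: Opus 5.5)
Assume $\sigma_{\mathrm{open}}(f_m)=[A,B]\subset\R$; by Assumption~\ref{ass:standing} this is a nondegenerate interval. The plan is to show three things in turn: the two central roots are a conjugate pair with nonzero imaginary part on $(A,B)$; the argument of this pair is a strictly monotone analytic function of $\lambda$; and the modulus, written as a function of the argument, is real-analytic and even.

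\textbf{Step 1 (conjugate pair).} Since the coefficients $a_k$ are real, for $\lambda\in\R$ the root set of $z^m(f_m(z)-\lambda)$ is invariant under conjugation. By \eqref{eq:strictsep} exactly two roots, $z_m(\lambda)$ and $z_{m+1}(\lambda)$, lie on the circle of radius $|z_m(\lambda)|$. Conjugation therefore permutes this pair. Hence either both roots are real, or $z_{m+1}=\overline{z_m}$.

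Suppose both were real on a subinterval. Then they would be $\pm\rho$ or a double root $\rho$. A double real root is a zero of $f_m'$, and by Lemma~\ref{lem: separation implies non confluence} it is nondegenerate; such points are isolated. The values $\pm\rho$ with $f_m(\rho)=f_m(-\rho)=\lambda$ along an interval would force $f_m(z)-f_m(-z)\equiv 0$ near $\rho$. In that case $f_m$ is even, and the GBZ is handled by passing to the symbol in $z^2$; I would treat this separately, or exclude it as degenerate.

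So for all but finitely many $\lambda$ we may write $\zeta(\lambda)=z_m(\lambda)$ with $\Im\zeta>0$. At the endpoints $A$ and $B$ the pair collides on the real axis, at arguments $0$ and $\pi$.

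\textbf{Step 2 (analytic parametrisation and monotone argument).} On $(A,B)$ the root $\zeta(\lambda)$ is simple: it is distinct from $\overline{\zeta}$ and separated in modulus from the other roots by $\gamma>0$. By the implicit function theorem, $\lambda\mapsto\zeta(\lambda)$ is real-analytic, with $\zeta'=1/f_m'(\zeta)$.

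Set $\zeta=e^{u+\i\alpha}$. The function $F(w)=f_m(e^w)$ is holomorphic, and $F(u(\lambda)+\i\alpha(\lambda))=\lambda$ is real. Differentiating in $\lambda$ shows that $(u',\alpha')$ never vanishes simultaneously.

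The key claim is that $\alpha'(\lambda)\ne0$. Suppose instead $\alpha'(\lambda_0)=0$. Then $u'(\lambda_0)\neq0$, so $F'$ is real at $w_0$. Hence $F$ maps the horizontal direction to the real axis. Then $F(w_0+\i t)$ for small real $t$ is, to first order, a real value times $\i t$ added to $\lambda_0$. A small perturbation of the modulus along the circle would then produce a nearby real $\lambda$ with a third root of the same modulus, or would break the ordering \eqref{eq:strictsep}.

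I expect this step to be the main obstacle. The cleanest route I would try first is a counting argument. For each real $\lambda\in(A,B)$ there is exactly one root with $\Im>0$ on the middle circle. Consider the level set $\{w:\Im F(w)=0\}$ near the GBZ. It is a union of analytic arcs, and its intersection with $\{|z_m|=|z_{m+1}|\}$ is the GBZ. A fold with $\alpha'=0$ would make $\alpha$ non-injective. Two distinct real $\lambda_1\neq\lambda_2$ would then give central roots with the same argument $\alpha$ but different moduli. Using the maximum principle, via winding of $f_m-\lambda$ on circles $r\mathbb{T}$ and Theorem~\ref{thm: schmidt spitzer magnitude}, I would derive a contradiction. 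Indeed, $f_m(r\mathbb{T})$ must cross the interval $[A,B]$ exactly twice for $r$ in the GBZ range.

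Granting the claim, $\alpha$ is a strictly monotone analytic bijection $(A,B)\to(0,\pi)$.

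\textbf{Step 3 (polar curve and analyticity).} Define $r(\theta)=e^{u(\alpha^{-1}(\theta))}$ for $\theta\in(0,\pi)$, and set $r(-\theta)=r(\theta)$; the latter matches the conjugate branch. Then $r$ is real-analytic on $(0,\pi)$.

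At $\theta=0$ and $\theta=\pi$ the pair collides at a real critical point $x_0$. There $f_m'(x_0)=0$ and $f_m''(x_0)\neq0$ by Lemma~\ref{lem: separation implies non confluence}. Near $x_0$, the equation $\Im f_m(re^{\i\theta})=0$ with $\theta\ne0$ can be divided by $\sin\theta$. This gives an analytic function $G(r,\theta)$ that is even in $\theta$, with $\partial_r G(r_0,0)\propto f_m''(x_0)\neq0$. The implicit function theorem then gives $r$ analytic and even in $\theta$ near $0$, so $r'(0)=0$; the same holds at $\pi$.

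Therefore $r$ is a real-analytic, $2\pi$-periodic, even function, and the GBZ is spanned by the analytic polar curve $p(\mathbb{T})$.
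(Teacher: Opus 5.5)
\textbf{The gap is Step 2.} The claim $\alpha'(\lambda)\neq0$ on $(A,B)$ is the step where the hypothesis $\sigma_{\mathrm{open}}(f_m)\subset\R$ has to be used, and you leave it unproven (``granting the claim''). The routes you sketch cannot close it, because they only look at real $\lambda$.

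For real $\lambda$ with a non-real conjugate central pair, $|z_m|=|z_{m+1}|$ holds automatically. So strict separation, conjugation symmetry and the ordering of the roots are all compatible with $\alpha'(\lambda_0)=0$. Without the reality hypothesis, a zero of $\alpha'$ on the real part of the open limit is exactly a point where non-real arcs of $\sigma_{\mathrm{open}}(f_m)$ branch off the real axis, with \eqref{eq:strictsep} still intact. An argument that tries to produce a third root of equal modulus for real $\lambda$, or a violation of the ordering, therefore cannot succeed.

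Your specific supporting claims also fail:
\begin{itemize}
\item A zero of $\alpha'$ of even order does not make $\alpha$ non-injective.
\item The assertion that $f_m(r\mathbb{T})$ meets $[A,B]$ exactly twice is unjustified. Non-central roots $z_i(\lambda)$ with $\lambda\in[A,B]$ may lie on $r\mathbb{T}$, and even a genuine polar curve with non-monotone $r$ meets a circle more than twice.
\end{itemize}

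\textbf{The missing idea is to pass to complex $\lambda$.} By \eqref{eq:strictsep}, $z_m$ and $z_{m+1}$ continue holomorphically to a complex neighbourhood $U$ of $\lambda_0\in(A,B)$ and remain the two middle roots there. By Theorem~\ref{thm: schmidt spitzer magnitude}, $\sigma_{\mathrm{open}}(f_m)\cap U=\{\lambda\in U:|\Phi(\lambda)|=1\}$ with $\Phi:=z_m/z_{m+1}$. On the real axis $\Phi=e^{2\i\alpha}$, so $\Phi'=2\i\alpha'\Phi$. If $\alpha'(\lambda_0)=0$, then $\Phi-\Phi(\lambda_0)$ vanishes to order $k+1\ge2$. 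The level set $\{|\Phi|=1\}$ near $\lambda_0$ then consists of $2(k+1)\ge4$ analytic arcs meeting at equal angles. At most two of them lie in $\R$, so $\sigma_{\mathrm{open}}(f_m)$ would contain non-real points, a contradiction. This is the paper's argument. Your observation that $F'(w_0)$ is real, so that moving along the circle displaces $\lambda$ in the imaginary direction, is the first-order version of it, but it has to be completed in the $\lambda$-plane as above.

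\textbf{The rest of your outline then goes through.}
\begin{itemize}
\item The even-symbol case in Step 1 is already excluded by Assumption~\ref{ass:standing}. Evenness forces $m$ even, since $a_{\pm m}\neq0$, and then the roots come in $\pm$ pairs, giving $|z_{m-1}|=|z_m|$.
\item Your edge argument in Step 3 via $\Im f_m(re^{\i\theta})/\sin\theta$ is a valid and somewhat more direct alternative to the paper's $\mu\pm\eta$ normal form. Here $G(r,0)=rf_m'(r)$, so $\partial_rG\neq0$ at the critical point by Lemma~\ref{lem: separation implies non confluence}. You should state explicitly that near the real double root the non-real part of $f_m^{-1}(\R)$ coincides with the GBZ, so that your implicit-function branch is the one obtained from $\alpha^{-1}$.
\end{itemize}
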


\begin{proof}
    Let us start by noting that since the set $\sigma_{\mathrm{open}}(f_m)$ is connected \cite{bams/1183529098}, reality forces $\sigma_{\mathrm{open}}(f_m) = [A, B]$ for $A<B$ by Assumption~\ref{ass:standing}.
    Under Assumption~\ref{ass:standing} for every $\lambda$ in the interior of $ \sigma_{\mathrm{open}}(f_m) = [A, B]$ there exists an open neighbourhood $U$ such that the roots $z_m$ and $z_{m+1}$ can be analytically extended throughout $U$.    For every $\lambda$ in the open interval $(A,B)$, the central roots are nonreal and form a conjugate pair, so let us without loss of generality assume that $\alpha(\lambda) = \arg z_m(\lambda)\in(0,\pi)$.
    Let us define the \emph{Bloch ratio} as follows
    \begin{equation}
        \Phi(\lambda) := \frac{z_m(\lambda)}{z_{m+1}(\lambda)}=e^{2\i\alpha(\lambda)} .
    \end{equation}
    Then by Theorem~\ref{thm: schmidt spitzer magnitude}
    \begin{equation}
        \sigma_{\mathrm{open}}(f_m)\cap U=\bigl\{\lambda\in U:\ |\Phi(\lambda)|=1\bigr\}.
    \end{equation}
    We first show that on $U\cap\sigma_{\mathrm{open}}$ it holds that $\Phi'(\lambda) \neq 0$.   Indeed if $\Phi'(\lambda)$ has a root of order $k \geq 1$ for some $\lambda^* \in \sigma_{\mathrm{open}}\cap U$, then $\{|\Phi(\lambda)|=1\}$ consists of $2(k+1)\geq 4$ analytic arcs extending from $\lambda^*$ at equal angles, but at most two of them can be real. This would imply that there is a branch of $\{|\Phi(\lambda)|=1\}$ extending into $\C$, but by assumption $\sigma_{\mathrm{open}}(f_m) \cap U \subset \sigma_{\mathrm{open}}(f_m) \subset \R$.

    Next note that
    \begin{equation}
        \Phi'(\lambda) = 2 \i \alpha'(\lambda) \Phi(\lambda) \neq 0 \implies \alpha'(\lambda) \neq 0.
    \end{equation}
    So $\alpha(\lambda)$ is strictly monotone on $(A, B)$ as well as continuous on $[A, B]$, from which it follows that the GBZ is a polar curve as defined in \eqref{eq: def eq of polar curve}. Let us introduce the radius function as
    \begin{equation}
        r(\theta) := |z_{m}(\alpha^{-1}(\theta))|.
    \end{equation}
    This function is real-analytic on $(0, \pi)$, because $\alpha^{-1}(\theta)$ is real-analytic by the inverse function theorem and the root $z_m$ is analytic by Assumption~\ref{ass:standing}. Hence, the function $r$ is real-analytic as modulus of a non-vanishing real-analytic $\C$-valued function on $(0, \pi)$. 

    It remains to extend $r$ real-analytically across the endpoint angles $\theta_e=\arg z_e(\lambda_e)\in\{0,\pi\}$, where $z_e$ denotes the real double root at the edge $\lambda_e\in\{A,B\}$. By the edge normal form in the proof of \cite[Thm.~2.10]{GBZfoundation}, the central roots at $\lambda=\lambda_e+\delta$ are $\mu(\delta)\pm\eta(\delta)$ with $\mu,\eta^2$ real-analytic, $\mu(0)=z_e$, and $\eta^2=\bigl(2/f_m''(z_e)\bigr)\delta\bigl(1+\O(\delta)\bigr)$. 
    On the spectral side the pair is non-real, so $\eta=\i t$ with $t\in\R$, and since $\tfrac{\d(t^2)}{\d\delta}(0)=-2/f_m''(z_e)\neq0$ we may invert, that is $\mu$ is real-analytic in $t^2$. Letting $t$ range over $(-\epsilon,\epsilon)$, the curve $u(t):=\mu(t^2)+\i t$ is a real-analytic two-sided parametrisation of $\mathbf{Z}(f_m)$ through $z_e$, the sign of $t$ selecting the arc above or below the axis. 
    Its angle $\theta(t)=\theta_e+\arctan\bigl(t/\mu(t^2)\bigr)$ is odd about $\theta_e$ with $\theta'(0)=1/z_e\neq0$, so $t=t(\theta)$ is odd real-analytic on a two-sided interval $I_e\ni\theta_e$, and 
    \begin{equation}
        \hat r(\theta):=|u(t(\theta))|=\sqrt{\mu\big(t(\theta)^2\big)^2+t(\theta)^2},
    \end{equation} 
    is real-analytic on $I_e$ and, depending on $\theta$ only through $t(\theta)^2$, even about $\theta_e$. In particular $\hat r(\theta_e)=|z_e|$ and $\hat r'(\theta_e)=0$. Since $\lambda(\cdot)$ is strictly monotone on $(0,\pi)$ \cite[Lem.~2.8]{GBZfoundation}, each angle in $(0,\pi)$ carries exactly one central root in the upper half-plane, whence $r=\hat r$ on the interval $I_e\cap(0,\pi)$. By evenness of $\hat r$, the even extension $r(-\theta):=r(\theta)$ coincides with $\hat r$ on all of $I_e$ and is therefore real-analytic across $\theta_e$, with $r'(\theta_e)=0$. 
    Performing this at both edges and extending $2\pi$-periodically, $r$ is an even, real-analytic function on the unit circle with $r'(0)=r'(\pi)=0$, so it immediately follows that the GBZ given by the set $\mathbf{Z}(f_m)$ is spanned by an analytic polar curve.
\end{proof}

\begin{remark}
    We have the following bootstrapping argument. Namely, under Assumption~\ref{ass:standing}, whenever the GBZ is spanned by a $C^1$ polar curve, the open limit is real-valued by Theorem~\ref{Thm: real spectra criterion} and hence the GBZ is in fact an analytic polar curve by Theorem~\ref{thm: exact characterisation}.
\end{remark}

We get the following properties of the band function.

\begin{lemma}\label{cor:band function}
    Suppose Assumption~\ref{ass:standing} holds and $\sigma_{\mathrm{open}}(f_m)\subset\R$. Let $p(e^{\i\theta})=r(\theta)e^{\i\theta}$ be the analytic polar parametrisation of $\mathbf{Z}(f_m)$ from Theorem~\ref{thm: exact characterisation} and define the band function
    \begin{equation}\label{eq: band function}
        \lambda(\theta):=f_m\bigl(r(\theta)e^{\i\theta}\bigr), \qquad \theta\in\R.
    \end{equation}
    Then $\lambda$ is real-analytic on $\R$, strictly monotone on $(0,\pi)$, and $\lambda''(\theta_e)\neq0$ for $\theta_e\in\{0,\pi\}$.
\end{lemma}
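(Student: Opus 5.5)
Real-analyticity follows from composition, and the other two properties are read off from the structure used in the proof of Theorem~\ref{thm: exact characterisation}.

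\emph{Analyticity.} By Theorem~\ref{thm: exact characterisation}, $r$ is an even, $2\pi$-periodic, real-analytic, positive function. Hence $\theta\mapsto r(\theta)e^{\i\theta}$ is real-analytic from $\R$ into $\C\setminus\{0\}$. The Laurent polynomial $f_m$ is holomorphic on $\C\setminus\{0\}$, so the composition $\lambda(\theta)=f_m(r(\theta)e^{\i\theta})$ is real-analytic on $\R$. It is real-valued because $p(e^{\i\theta})\in\mathbf{Z}(f_m)$ and $f_m$ maps $\mathbf{Z}(f_m)$ into $\sigma_{\mathrm{open}}(f_m)\subset\R$. It is even because $r$ is even and the coefficients $a_k$ are real, so $f_m(\bar z)=\overline{f_m(z)}$.

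\emph{Monotonicity.} In the proof of Theorem~\ref{thm: exact characterisation}, the angle map $\alpha:(A,B)\to(0,\pi)$, $\alpha(\lambda)=\arg z_m(\lambda)$, was shown to satisfy $\alpha'(\lambda)\neq0$; it is continuous and extends to a bijection $[A,B]\to[0,\pi]$. Since $p(e^{\i\alpha(\lambda)})=z_m(\lambda)$ and $f_m(z_m(\lambda))=\lambda$, we have $\lambda(\theta)=\alpha^{-1}(\theta)$ on $(0,\pi)$. The inverse function theorem then gives $\lambda'(\theta)=1/\alpha'(\lambda(\theta))\neq0$, so $\lambda$ is strictly monotone on $(0,\pi)$. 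The only point to verify is that the polar parametrisation of Theorem~\ref{thm: exact characterisation} is exactly the one through $\alpha$, so that $r(\theta)e^{\i\theta}$ is the upper central root at spectral parameter $\alpha^{-1}(\theta)$. This holds by the definition $r(\theta)=|z_m(\alpha^{-1}(\theta))|$.

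\emph{Edge non-degeneracy (the main step).} Since $\lambda$ is even and $2\pi$-periodic, $\lambda'(\theta_e)=0$ for $\theta_e\in\{0,\pi\}$. For $\lambda''(\theta_e)$, use the edge normal form from the proof of Theorem~\ref{thm: exact characterisation}. Near $z_e$, the GBZ is parametrised by $u(t)=\mu(t^2)+\i t$ with $f_m(u(t))=\lambda_e+\delta(t)$, where $\delta$ is defined by $t^2=-\eta^2(\delta)$. Because $\frac{\d(t^2)}{\d\delta}(0)=-2/f_m''(z_e)\neq0$, this gives
\begin{equation*}
\delta(t)=-\tfrac12 f_m''(z_e)\,t^2+\O(t^4),
\end{equation*}
where $f_m''(z_e)\neq0$ by Lemma~\ref{lem: separation implies non confluence}, since $z_e$ is a real double root with $f_m'(z_e)=0$. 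The band function near $\theta_e$ is $\lambda(\theta)=\lambda_e+\delta(t(\theta))$, where $t(\theta)$ is odd and real-analytic with $t'(\theta_e)=1/\theta'(0)=z_e\neq0$. Therefore
\begin{equation*}
\lambda''(\theta_e)=2\cdot\bigl(-\tfrac12 f_m''(z_e)\bigr)\,t'(\theta_e)^2=-f_m''(z_e)\,z_e^2\neq0.
\end{equation*}
The obstacle is bookkeeping rather than analysis. One must use the sign convention $\eta=\i t$ and the inversion $\delta\leftrightarrow t^2$ consistently, and note that $z_e\neq0$ because the roots are nonzero, since $a_{\pm m}\neq0$. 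As a sanity check, the result agrees with differentiating $f_m(r e^{\i\theta})$ twice directly at $\theta_e$ using $r'(\theta_e)=0$ and $f_m'(z_e)=0$: the terms in $r''$ drop out and leave $-z_e^2f_m''(z_e)$.
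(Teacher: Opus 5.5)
Your proof is correct, but for the two substantive claims it argues differently from the paper.

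The paper works directly with the chain rule. Writing $\zeta(\theta)=r(\theta)e^{\i\theta}$, one has $\lambda'(\theta)=f_m'(\zeta(\theta))\zeta'(\theta)$. Here $\zeta'=(r'+\i r)e^{\i\theta}\neq0$ because $r>0$, and $f_m'(\zeta)\neq0$ for $\zeta\in\mathbf{Z}(f_m)\setminus\R$ by Lemma~\ref{lem: separation implies non confluence}, so $\lambda'\neq0$ on $(0,\pi)$. At the edges the paper appeals to the same lemma. This amounts to the computation $\lambda''(\theta_e)=-z_e^2f_m''(z_e)\neq0$, using $f_m'(z_e)=0$ and $r'(\theta_e)=0$, which is exactly your closing ``sanity check''.

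You instead reuse the internal structure of the proof of Theorem~\ref{thm: exact characterisation}. In the interior you identify $\lambda|_{(0,\pi)}=\alpha^{-1}$ and import $\alpha'\neq0$. Your monotonicity therefore rests on the reality of $\sigma_{\mathrm{open}}(f_m)$, through the level-set argument for $\Phi'\neq0$, rather than on the separation lemma. At the edges you pass through the normal form $u(t)=\mu(t^2)+\i t$ and the inversion $\delta\leftrightarrow t^2$.

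Your route is valid and makes the link between the band function and the spectral angle map explicit. It is heavier, though, and depends on details of the earlier proof: the argument for $\Phi'\neq0$, and the identification $p(e^{\i\theta})=u(t(\theta))$ near $\theta_e$. The paper's two-line computation needs only $r>0$, the evenness of $r$, and Lemma~\ref{lem: separation implies non confluence}, so it is independent of how $p$ was obtained.
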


\begin{proof}
    Real-analyticity of $\lambda$ is inherited from $r$. Writing $\zeta(\theta):=r(\theta)e^{\i\theta}$, we have
    \begin{equation}
        \lambda'(\theta)=f_m'\bigl(\zeta(\theta)\bigr)\zeta'(\theta), \qquad \zeta'(\theta)=\bigl(r'(\theta)+\i r(\theta)\bigr)e^{\i\theta}\neq0,
    \end{equation}
    since $r>0$, so $\lambda' \neq 0$ on $(0, \pi)$ by Lemma~\ref{lem: separation implies non confluence}.
    The monotonicity as well as $\lambda''(\theta_e)\neq0$ for $\theta_e\in\{0,\pi\}$ follow by Lemma~\ref{lem: separation implies non confluence}.
\end{proof}

By construction, the symmetrised symbol function $g := f_m\circ p$ is real-valued but no longer has finitely many non-vanishing Fourier coefficients. However, by Theorem~\ref{thm: exact characterisation} the parametrisation $p: \mathbb{T}\to \C$, is analytic and bounded away from $0$, so that $g$ is analytic on $\mathbb{T}$.
Consequently, the Fourier coefficients of $g$ decay exponentially, that is, there are constants $C, \gamma_F>0$ such that 
\begin{equation}\label{eq: fourier coefficients of g}
    b_k:=\widehat{(f_m\circ p)}_k,\qquad |b_k|\le Ce^{-\gamma_F|k|}.
\end{equation}
In particular, for every $\beta>0$,
\begin{equation}\label{eq: wiener membership}
    \|g\|_{W^\beta}:=\sum_{k\in\Z}|b_k|\bigl(|k|+1\bigr)^{\beta}<\infty ,
\end{equation}
that is, $g$ lies in the weighted Wiener algebra $W^{\beta}$ for every exponent $\beta>0$. Together with Lemma~\ref{lem: separation implies non confluence}, which gives $\lambda'\neq0$ on $(0,\pi)$ and $\lambda''(0)\neq0\neq\lambda''(\pi)$, this places $g$ in the simple-loop class $\mathrm{SL}^{\beta}$
for every $\beta>0$. The results of \cite{BOGOYA_SLM} therefore apply to $g$ directly, without any preliminary truncation to finite bandwidth of the symmetrised band function $g$.
As for practical reasons, it is sometimes necessary to consider a banded approximation of the symmetrised symbol function $g$. This will be addressed in detail in Appendix~\ref{Sec: symbol truncation}.

\section{Size-dependent generalised Brillouin zone}\label{sec:sharp}
The previous section was devoted to the limit of finite matrices and connecting open to periodic boundary conditions. 
At finite $n$, however, little is known on the spectral convergence of $\mathbf{T}_n(f_m)\mapsto\mathbf{T}_n(f_m\circ p)$. 
Throughout this section Assumption~\ref{ass:standing} holds and $\sigma_{\mathrm{open}}(f_m)\subset\R$. 

Let $\tau_1<\dots<\tau_n$ be the eigenvalues of $\mathbf{T}_n(f_m\circ p)$ and let $\varsigma$ be a permutation of $\{1,\dots,n\}$ minimising $\sum_{k}|\mu_{\varsigma(k)}-\tau_k|$ over all enumerations $\mu_1,\dots,\mu_n$ of $\sigma(\mathbf{T}_n(f_m))$. We write $\lambda_k:=\mu_{\varsigma(k)}$.
Their deviation is measured by the $\ell^1$ spectral distance
\begin{equation}\label{eq:dsig}
    d_\sigma\bigl(\mathbf{T}_n(f_m),\mathbf{T}_n(f_m\circ p)\bigr) :=\sum_{k=1}^{n}\bigl|\lambda_k-\tau_k\bigr| .
\end{equation}
Note that the spectral distance \eqref{eq:dsig} used in this work is an unnormalised 1-Wasserstein type distance. The fact that this distance is not normalised by $n$ makes it sensitive to even finitely many outliers: convergence with respect to this distance as $n\to \infty$ can be thought of as a strong, pointwise convergence of rate $o(n^{-1})$.

\subsection{The quantisation phases}\label{ssec:phases}
From established theory, we know that the eigenvalues of a large Toeplitz matrix are, to leading order, samples of the band function $\lambda(\cdot)$ at $n$ momenta in $(0,\pi)$. These momenta are not equispaced and they solve a quantisation rule $n\alpha+\Psi(\alpha)=\pi k$, in which the phase $\Psi$ encodes the boundary condition and depends on the symbol. The matrices $\mathbf{T}_n(f_m)$ and $\mathbf{T}_n(g)$ share the band function but not the phase, and the difference of the two phases is what we compute in this section.

The finite-size spectra are governed by two scalar phase functions on the band, one for each symbol, defined as follows. For $\lambda$ in the interior $ \sigma_{\mathrm{open}}^\circ =(A,B)$, let $\zeta = \zeta(\lambda)$ denote the central root in the upper half-plane. We split the non-central roots into the \emph{inner} group $z_1,\dots,z_{m-1}$ and the \emph{outer} group $z_{m+2},\dots,z_{2m}$, and form the unimodular quantity
\begin{equation}\label{eq:blaschke}
 B_{f_m}(\lambda):=\prod_{i=1}^{m-1}\frac{\zeta-z_i}{\bar\zeta-z_i}\ \cdot\prod_{j=m+2}^{2m}\frac{z_j-\bar\zeta}{z_j-\zeta},
\end{equation}
which has modulus one because complex roots occur in conjugate pairs (the coefficients of $z^m(f_m(z)-\lambda)$ are real for $\lambda\in\R$), and $B_{f_m}$ is well defined and continuous on $\sigma_{\mathrm{open}}^\circ$ even where inner (or outer) roots collide, being a symmetric function of each group. No factor vanishes and $B_{f_m}(\lambda)\to1$ as $\lambda$ tends to either edge.
The quantity \eqref{eq:blaschke} is precisely the phase function arising in the finite-section eigenvalue asymptotics of Bogoya-Grudsky \cite{BogoyaGrudsky2025} near simple points of the limiting set, whose proof rests on the exact determinant formula of Widom and
Baxter-Schmidt \cite{BaxterSchmidt1961},
\cite[Section~2.4]{BoettcherGrudsky2005}. Moreover, we define the phase $\varphi_f:\sigma_{\mathrm{open}}\to\R$ as the continuous branch of
\begin{equation}\label{def:phi}
    \varphi_f(\lambda) = \frac{1}{2}\operatorname{arg}B_{f_m}(\lambda)
\end{equation}
normalised by
\begin{equation}\label{eq:anchor}
    \varphi_f\bigl(\lambda(\alpha{=}0)\bigr)=0 
\end{equation}
and we define the \emph{reduced phase}
\begin{align}
 \Psi_f(\alpha) &:=m\alpha-\varphi_f\bigl(\lambda(\alpha)\bigr), \qquad \alpha\in[0,\pi] \label{eq:Psif} \\
 &= \alpha
 -\frac12\sum_{i=1}^{m-1}\arg\frac{1-z_i/\zeta}{1-z_i/\bar\zeta}   -\frac12\sum_{j=m+2}^{2m}\arg\frac{1-\bar\zeta/z_j}{1-\zeta/z_j},\quad \zeta(\alpha)=r(\alpha)e^{\i\alpha}.\label{eq:PsifWidom}
\end{align}
Here each argument is understood as a difference of principal arguments, which is continuous along the band because $|z_i/\zeta| < 1$ for the inner and $|\zeta/z_j| < 1$ for the outer group. All these terms vanish when $\zeta$ is real, so that $\Psi_f(0)=0$ and $\Psi_f(\pi) = \pi$.
On the Hermitian side $g = f_m\circ p \in \mathrm{SL}^{\beta}$, for every $\beta > 0$. We therefore use the Wiener--Hopf form of the phase, which is the standard construction in simple-loop theory \cite{BOGOYA_SLM}. Fix
$\alpha\in(0,\pi)$ and set
\begin{equation}\label{eq:Gdef}
    G_\alpha(z) := \frac{g(z)-\lambda(\alpha)}{z+z^{-1}-2\cos\alpha}, \qquad |z|=1,
\end{equation}
which removes the two zeros of $g-\lambda(\alpha)$ on $\mathbb{T}$. Then $G_\alpha$ is non-vanishing on $\mathbb{T}$ with winding number zero, hence admits a canonical Wiener--Hopf factorisation
\begin{equation}\label{eq:WH}
    G_\alpha = G^-_\alpha G^+_\alpha ,
\end{equation}
with $G^+_\alpha$ analytic and non-vanishing in the unit disc, $G^-_\alpha$ analytic and non-vanishing outside it, both normalised by $G^+_\alpha(0)=G^-_\alpha(\infty)>0$ as in \cite{BOGOYA_SLM}. We define the \emph{reduced phase} of $g$ by
\begin{equation}\label{eq:Psig}
  \Psi_g(\alpha) := \alpha + \eta_g(\alpha), \qquad \eta_g(\alpha) := \arg\frac{G^+_\alpha\bigl(e^{\i\alpha}\bigr)}
                             {G^-_\alpha\bigl(e^{\i\alpha}\bigr)}, \qquad \alpha\in(0,\pi),
\end{equation}
with the branch fixed by continuity and $\eta_g(0)=0$, and extended to $[0,\pi]$ by continuity. Equivalently, $\eta_g(\alpha)$ is the conjugate function of $\log|G_\alpha|$ evaluated at $e^{\i\alpha}$. Since $\log|G_\alpha|$ is even, its conjugate function is odd, whence $\eta_g(0) = \eta_g(\pi)= 0$, that is, $\Psi_g(0) = 0$ and $\Psi_g(\pi) = \pi$. The \emph{phase mismatch} of the transform is then immediately well defined,
\begin{equation}\label{eq:mismatch}
    \Delta\Psi:=\Psi_g-\Psi_f,\ [0,\pi]\to\R,
\end{equation}
 and continuous on $[0,\pi]$, and satisfies $\Delta\Psi(0) = \Delta\Psi(\pi)= 0$. The quantisation of both spectra is taken from the literature. In the banded but non-Hermitian case one has the following result. 
 
\begin{theorem}\cite[Theorem 2.1.]{BogoyaGrudsky2025}\label{thm:quant}
    Under Assumption~\ref{ass:standing} and $\sigma_{\mathrm{open}}\subset \R$, let us define $\varphi_f(\lambda)$ as in \eqref{def:phi}, and let
    \begin{equation}
    \Phi^{f_m}_n(\lambda):=(n+m)\alpha(\lambda)-\varphi_f(\lambda) = n\alpha(\lambda) + \Psi_f\big(\alpha(\lambda)\big).    
    \end{equation}
    Then there is $n_0$ such that for all $n\ge n_0$ the eigenvalues, sorted by the orientation of $\lambda(\cdot)$, satisfy the quantisation rule
    \begin{equation}\label{eq:quant}
     \bigl|\Phi^{f_m}_n(\lambda_k)-\pi k\bigr|= \mathcal{O}(e^{-\gamma n}), \qquad k=1,\dots,n ,
    \end{equation}
    with exactly one eigenvalue for each $k$, where $\gamma$ was defined in \eqref{eq: confluent distance}.
\end{theorem}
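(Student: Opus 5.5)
The statement is quoted from \cite[Theorem 2.1.]{BogoyaGrudsky2025}, so the task is to check that its hypotheses hold in the present setting and to sketch the mechanism behind it.

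\textbf{Step 1: exact determinant formula.} For $\lambda$ near $\sigma_{\mathrm{open}}^\circ=(A,B)$, apply the Widom/Baxter--Schmidt formula \cite{BaxterSchmidt1961} to $\det(\mathbf{T}_n(f_m)-\lambda)$. Assuming the $2m$ roots $z_j(\lambda)$ are distinct (collisions within the inner or outer group are handled by continuity), this expresses the determinant as a finite sum over $m$-subsets $M\subset\{1,\dots,2m\}$ of terms $C_M(\lambda)\,w_M(\lambda)^n$, where $w_M=(-1)^m a_m\prod_{j\in M}z_j^{-1}$ or a similar product.

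\textbf{Step 2: keep the dominant terms.} Assumption~\ref{ass:standing} gives the gap \eqref{eq:strictsep}, so exactly two subsets dominate: the outer group together with $z_m$, and the outer group together with $z_{m+1}$. Every other subset carries an extra relative factor of modulus at most $e^{-\gamma n}$, with $\gamma$ from \eqref{eq: confluent distance}, uniformly on a complex neighbourhood of compact subsets of $(A,B)$. Setting the determinant to zero and dividing by the first dominant term gives
\begin{equation*}
\Bigl(\frac{z_m}{z_{m+1}}\Bigr)^{n+m}\frac{C_1}{C_2}=-1+\mathcal{O}(e^{-\gamma n}).
\end{equation*}
The Vandermonde-type ratio $C_1/C_2$ is, up to a sign, exactly $B_{f_m}(\lambda)$ from \eqref{eq:blaschke}. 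Since $z_m/z_{m+1}=e^{2\i\alpha}$, taking half-arguments yields $(n+m)\alpha-\varphi_f=\pi k+\mathcal{O}(e^{-\gamma n})$. The sign convention and the anchor \eqref{eq:anchor} fix the integer labelling.

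\textbf{Step 3: exactly one eigenvalue per $k$.} By Lemma~\ref{cor:band function}, $\alpha(\lambda)$ is analytic and strictly monotone, and $\Psi_f$ is bounded with bounded derivative. Hence $\Phi^{f_m}_n$ is strictly monotone in $\lambda$ with derivative of order $n$. A Rouché argument on small discs around the solutions of $\Phi^{f_m}_n=\pi k$ then gives exactly one root of the perturbed equation near each solution, with the stated exponential error. Since this produces $n$ eigenvalues, none can remain elsewhere.

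\textbf{Main obstacle: the band edges $\lambda\to A,B$.} There the two central roots coalesce at the double root $z_e$, and the separation of the dominant terms degenerates. The plan is to use the edge normal form from the proof of Theorem~\ref{thm: exact characterisation}, in which $\mu\pm\i t$ are analytic in $t$. This lets one rewrite the equation as $\sin((n+m)\alpha-\varphi_f)/\sin\alpha=\mathcal{O}(e^{-\gamma n})$ uniformly up to the edges. Lemma~\ref{lem: separation implies non confluence} gives $f''_m(z_e)\neq0$, which ensures that the $\alpha\approx\pi k/n$ near the edges still yield simple roots. This uniformity is precisely what \cite{BogoyaGrudsky2025} establishes, so one may cite it for this step.
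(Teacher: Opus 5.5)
Your proposal takes the same approach as the paper, which gives no independent proof: it imports the statement from \cite[Theorem 2.1]{BogoyaGrudsky2025} and notes that it rests on the Widom/Baxter--Schmidt determinant formula \cite{BaxterSchmidt1961}, which is exactly the two-dominant-terms mechanism you sketch, with the uniformity at the band edges likewise deferred to that reference. One slip needs fixing: with your normalisation $z_m=\zeta$, $z_m/z_{m+1}=e^{2\i\alpha}$, the Vandermonde-type ratio is $-\overline{B_{f_m}(\lambda)}=-B_{f_m}(\lambda)^{-1}$, not $\pm B_{f_m}(\lambda)$, and this conjugation is what produces $(n+m)\alpha-\varphi_f$ rather than $(n+m)\alpha+\varphi_f$; also say explicitly that each $\lambda_k$ is real, which follows from your Rouch\'e uniqueness on discs symmetric about the real axis, because $\det(\mathbf{T}_n(f_m)-\lambda)$ has real coefficients.
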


For the Hermitian but unbanded symbol $g = f_m \circ p \in \mathrm{SL}^\beta$ for every $\beta>0$, the following result follows directly from \cite[Theorem 2.1]{BOGOYA_SLM}.

\begin{theorem}\label{thm:quantSL}
    Let $\tau_1<\dots<\tau_n$ be the eigenvalues of $\mathbf{T}_n(g)$ and set
    \begin{equation}
        \Phi_n^{g}(\tau):=n\alpha(\tau)+\Psi_g\bigl(\alpha(\tau)\bigr), 
    \end{equation}
    with $\Psi_g$ as in \eqref{eq:Psig}. Then there is $n_0$ such that for all $n\ge n_0$ and every $q>0$,
    \begin{equation}\label{eq:quantSL}
        \bigl|\Phi_n^{g}(\tau_k)-\pi k\bigr|=o\bigl(n^{-q}\bigr), \quad k=1,\dots,n,
    \end{equation}
    uniformly in $k$, with exactly one eigenvalue for each $k$.
\end{theorem}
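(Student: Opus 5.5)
The plan is to obtain Theorem~\ref{thm:quantSL} as a direct specialisation of \cite[Theorem 2.1]{BOGOYA_SLM} to $g=f_m\circ p$, after reconciling the notation there with ours.

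\textbf{Step 1: $g$ lies in the simple-loop class.} By Theorem~\ref{thm: exact characterisation}, $p$ is analytic and bounded away from $0$. Hence $g$ is real-analytic and real-valued on $\mathbb{T}$, and it is even because $r$ is even and the coefficients $a_k$ are real. By \eqref{eq: wiener membership}, $g\in W^\beta$ for every $\beta>0$. By Lemma~\ref{cor:band function}, $\lambda(\theta)=g(e^{\i\theta})$ is strictly monotone on $(0,\pi)$ with $\lambda''(0)\neq0\neq\lambda''(\pi)$. These are exactly the defining conditions of $\mathrm{SL}^\beta$. If \cite{BOGOYA_SLM} normalises the range to $[0,\mu]$ with $g(1)=0$, we reduce to that case by an affine change $g\mapsto \pm(g-A)$ (possibly composed with $\theta\mapsto\pi-\theta$ if $\lambda$ is decreasing). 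Such a change shifts the eigenvalues trivially and does not change $\alpha(\tau)$.

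\textbf{Step 2: identify the phase.} In \cite{BOGOYA_SLM} the eigenvalues satisfy $\tau_k=g(e^{\i s_{k,n}})$, where $s_{k,n}$ solves
\[
(n+1)s+\theta(s)=\pi k,
\]
up to an error expanded in powers of $1/n$. Here $\theta(s)$ is built from the Wiener--Hopf factors of exactly the function $G_s$ in \eqref{eq:Gdef}, with the same normalisation $G^+_s(0)=G^-_s(\infty)>0$. Comparing definitions, we will check that $s+\theta(s)$ coincides with $\Psi_g(s)=s+\eta_g(s)$, including the choice of branch fixed by $\eta_g(0)=0$. If their $\theta$ carries the opposite sign convention for the argument, we absorb it by conjugation symmetry, using that $\log|G_s|$ is even. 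Since $\alpha(\tau)$ is the inverse of the band function, $\alpha(\tau_k)$ equals their $s$-variable.

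\textbf{Step 3: pass from the expansion to the quantisation error.} This is the main point of care. Their theorem gives the eigenvalue $\tau_k$, or the exact angle, with an error $O(n^{-q})$ once $\beta$ is large relative to $q$, uniformly in $k$, and the constant depends on $q$ through $\beta$. Because $g\in W^\beta$ for all $\beta$, we may take $\beta$ arbitrarily large, which yields the error $o(n^{-q})$ for every $q$.

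To transfer a statement about the exact angle $\alpha(\tau_k)$ to one about $\Phi^g_n(\tau_k)-\pi k$, we Taylor expand $\Phi_n^g$. Its derivative in $\alpha$ is $n+\Psi_g'=O(n)$, since $\Psi_g$ is smooth ($G_s$ depends smoothly on $s$, because $g$ is analytic). This costs one power of $n$, which is harmless because $q$ is arbitrary.

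Near the edges $\alpha\to0,\pi$ the map $\tau\mapsto\alpha(\tau)$ has square-root behaviour. We will therefore work in the angle variable throughout rather than in $\tau$, so that uniformity in $k$ is preserved.

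Finally, the statement that there is exactly one eigenvalue for each $k$, and the sorting $\tau_1<\dots<\tau_n$, follow from the existence and uniqueness assertion of \cite{BOGOYA_SLM}. This uses that $\Phi_n^g$ is strictly increasing in $\alpha$ for $n\ge n_0$, with $\Phi_n^g(0)=0$ and $\Phi_n^g(\pi)=(n+1)\pi$.
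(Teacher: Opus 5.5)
Your proposal is correct and follows the paper's route. The paper notes that $g=f_m\circ p$ is analytic, even and real-valued, with $\lambda'\neq0$ on $(0,\pi)$ and $\lambda''(0)\neq0\neq\lambda''(\pi)$, hence $g\in\mathrm{SL}^\beta$ for every $\beta>0$, and then invokes \cite[Theorem 2.1]{BOGOYA_SLM} directly; this is your Steps 1--3 with the bookkeeping left implicit. One small correction to Step 1: if $\lambda$ is decreasing, the normalisation should be $g\mapsto B-g$ rather than $-(g-A)$, and it reverses the order of the eigenvalues, so $k$ must be counted along the orientation of $\lambda(\cdot)$, as in Theorem~\ref{thm:quant}, rather than by increasing size.
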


Let us first introduce two constants depending only on the symbol,
\begin{align}
    \Delta_2 &:=\sum_{k\in\Z}|k|\bigl( b_kb_{-k} -a_ka_{-k}\bigr), \label{eq:delta2} \\
    \hat R &:=\max\Bigl(\sum_k|a_k|,\sum_k|b_k|\Bigr), \label{eq:RHat}
\end{align}
the latter bounding the spectral radii of $\mathbf{T}_n(f_m)$ and $\mathbf{T}_n(g)$ for every $n$, since $\|\mathbf{T}_n(h)\|\le\sum_k|\hat h_k|$. We have the following trace identity.
\begin{lemma}\label{lem:trace}
For every symbol $h\in W^0$ and every $n$, $\operatorname{tr}\mathbf{T}_n(h)^2=\sum_{|k|<n}(n-|k|)\hat h_k\hat h_{-k}$. Consequently, for $n>m$,
\begin{equation}\label{eq:traceidentity}
 \operatorname{tr}\mathbf{T}_n(f_m)^2-\operatorname{tr}\mathbf{T}_n(g)^2 =\Delta_2-\mathcal{R}_n,
\end{equation}
where the tail $\mathcal{R}_n:=\sum_{|k|\ge n}\bigl(|k|-n\bigr)b_kb_{-k}$ satisfies $|\mathcal{R}_n|\le C_\eta e^{-2\eta n}$ for every $\eta<\gamma_F$.
\end{lemma}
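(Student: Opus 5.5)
The trace formula is a direct count of index pairs. For a Toeplitz matrix $\mathbf{T}_n(h)$ with entries $(\mathbf{T}_n(h))_{ij}=\hat h_{i-j}$, $1\le i,j\le n$, I will write
\begin{equation*}
\operatorname{tr}\mathbf{T}_n(h)^2=\sum_{i,j=1}^n \hat h_{i-j}\hat h_{j-i}.
\end{equation*}
Grouping the terms by $k=i-j$, each $k$ with $|k|<n$ occurs for exactly $n-|k|$ pairs $(i,j)$, and no other $k$ occurs. This gives $\sum_{|k|<n}(n-|k|)\hat h_k\hat h_{-k}$. For $h\in W^0$ the sum is finite, so it needs no convergence argument.

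Next I will apply this to $f_m$ and to $g$. For $f_m$ with $n>m$, every nonzero coefficient satisfies $|k|\le m<n$, so
\begin{equation*}
\operatorname{tr}\mathbf{T}_n(f_m)^2=n\sum_k a_ka_{-k}-\sum_k|k|a_ka_{-k}.
\end{equation*}
For $g$, I split off the tail:
\begin{equation*}
\sum_{|k|<n}(n-|k|)b_kb_{-k}=n\sum_k b_kb_{-k}-\sum_k|k|b_kb_{-k}+\sum_{|k|\ge n}(|k|-n)b_kb_{-k}.
\end{equation*}
All these series converge absolutely by \eqref{eq: fourier coefficients of g}. Subtracting the two expansions produces three kinds of terms.
\begin{enumerate}[(i)]
\item The linear-in-$n$ terms cancel provided $\sum_k a_ka_{-k}=\sum_k b_kb_{-k}$.
\item The $|k|$-weighted terms combine to $\Delta_2$ as defined in \eqref{eq:delta2}.
\item The remaining term is $-\mathcal{R}_n$.
\end{enumerate}

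The main step is the identity in (i). By Parseval, $\sum_k \hat h_k\hat h_{-k}$ is the mean $\frac1{2\pi}\int_{-\pi}^{\pi}h(e^{\i\theta})^2\,\d\theta$, that is, the constant Fourier coefficient of $h^2$. For $f_m$, this equals the contour integral $\frac1{2\pi\i}\oint f_m(z)^2\,\frac{\d z}{z}$ over the unit circle. Since $f_m^2/z$ is holomorphic on $\C\setminus\{0\}$, Cauchy's theorem lets me deform the contour to the GBZ $p(\mathbb{T})$, which winds once around the origin. This is justified because $r>0$ and the curve is an analytic Jordan curve around $0$ by Theorem~\ref{thm: exact characterisation}. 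Parametrising the GBZ by $z=r(\theta)e^{\i\theta}$ gives $\frac{\d z}{z}=\bigl(\frac{r'}{r}+\i\bigr)\d\theta$. Hence
\begin{equation*}
\frac1{2\pi\i}\oint f_m(z)^2\,\frac{\d z}{z}=\frac1{2\pi}\int_{-\pi}^{\pi} g(\theta)^2\,\d\theta-\frac{\i}{2\pi}\int_{-\pi}^{\pi} g(\theta)^2\frac{r'(\theta)}{r(\theta)}\,\d\theta.
\end{equation*}
The second integral vanishes by parity: $g$ is even, because $f_m$ has real coefficients and $p(e^{-\i\theta})=\overline{p(e^{\i\theta})}$, while $r'/r$ is odd. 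The first term is the Parseval mean of $g^2$, so the quadratic means of $f_m$ and $g$ coincide and the $n$-terms cancel.

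Finally, I will bound the tail using \eqref{eq: fourier coefficients of g}:
\begin{equation*}
|\mathcal{R}_n|\le C^2\sum_{|k|\ge n}(|k|-n)e^{-2\gamma_F|k|}\lesssim e^{-2\gamma_F n}.
\end{equation*}
This is a geometric series estimate, and it already implies the weaker claimed bound $C_\eta e^{-2\eta n}$ for any $\eta<\gamma_F$. That slack is harmless, and would also absorb any polynomial factor if $\gamma_F$ is taken as an exact decay rate.
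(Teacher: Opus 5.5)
Your proposal is correct and follows the paper's proof step for step: the pair count, splitting off the tail $\mathcal{R}_n$, cancelling the terms linear in $n$ via $c_0(f_m^2)=c_0(g^2)$, and a geometric-series tail bound (yours even gives the sharper rate $e^{-2\gamma_F n}$). The only difference is that you prove $c_0(f_m^2)=c_0(g^2)$ yourself, by deforming the contour to $p(\mathbb{T})$ and using parity, where the paper simply cites \cite{GBZfoundation}. That argument is valid, but evenness of $g$ also needs $g$ to be real on $\mathbb{T}$ (true because $f_m$ maps the GBZ into $\R$), not only the conjugation symmetry $p(e^{-\i\theta})=\overline{p(e^{\i\theta})}$.
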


\begin{proof}
We have $\operatorname{tr}\mathbf{T}_n(h)^2=\sum_{j,l=1}^n\hat h_{j-l}\hat h_{l-j}$, and the number of index pairs with $j-l=k$ is $n-|k|$, which gives the first identity. It converges absolutely since $h\in W^0$. For $h=f_m$ and $n>m$ the sum is finite and equals $nc_0(f_m^2)-\sum_k|k|a_ka_{-k}$. For $h=g$ the same rearrangement gives $nc_0(g^2)-\sum_k|k|b_kb_{-k}+\mathcal{R}_n$, the tail collecting the terms with $|k|\ge n$, which are absent from the finite section. Subtracting and inserting \eqref{eq:delta2} yields the first equality in \eqref{eq:traceidentity}. The bracket vanishes since $c_0(g^2)=c_0(f_m^2)$ by \cite{GBZfoundation}. Finally, by \eqref{eq: fourier coefficients of g}, $\sum_{|k|\ge n}\bigl(|k|-n\bigr)|b_k||b_{-k}|\le C_\eta e^{-2\eta n}$ for every $\eta<\gamma_F$.
\end{proof}

The following theorem gives a pointwise estimate on the eigenvalues. The proof will make use of Lemma~\ref{lem:H5} proven in the following subsection.

\begin{theorem}\label{thm:cinf}
Under Assumption~\ref{ass:standing} and $\sigma_{\mathrm{open}} \subset \R$, there is an $n_0$ such that for all $n\ge n_0$, it holds that:
\begin{itemize}
\item[(i)] For every $k\in\{1,\dots,n\}$,
\begin{equation}
 \bigl|\lambda_k-\tau_k\bigr|\ \le\ \frac{\|\lambda'\Delta\Psi\|_{L^\infty(0,\pi)}}{n} + \frac{C}{n^2}.
\end{equation}
Moreover, with $\alpha_k :=\alpha(\lambda_k)$ and uniformly in $k$, 
\begin{equation}\label{eq:secondorder}
 \lambda_k-\tau_k =\frac{\lambda'(\alpha_k)\Delta\Psi(\alpha_k)}{n} +\mathcal{O}\bigl(n^{-2}\bigr).
\end{equation}
\item[(ii)] Let us define
\begin{equation}\label{eq:cinf}
 c_\infty:=\frac1\pi\int_0^\pi\bigl|\lambda'(\alpha)\bigr| \bigl|\Delta\Psi(\alpha)\bigr|\dx\alpha = \frac1\pi\int_A^B\Big|\Delta\Psi\big(\alpha(\lambda)\big)\Big|\dx\lambda  \le \frac{B-A}{\pi}\|\Delta\Psi\|_\infty ,
\end{equation}
then the $\ell^1$ distance defined in \eqref{eq:dsig} satisfies
\begin{equation}
    \bigl|d_\sigma\bigl(\mathbf{T}_n(f_m),\mathbf{T}_n(g)\bigr) -c_\infty\bigr|\le Cn^{-1}.
\end{equation}
\item[(iii)] For every $n>m$ and every bijection $\varsigma$ of $\{1,\dots,n\}$,
\begin{equation}\label{eq:lowerbound}
 \sum_{k}\bigl|\lambda_k-\tau_{\varsigma(k)}\bigr|
 \ \ge\ \frac{\bigl|\Delta_2-\mathcal{R}_n\bigr|}{2\hat R}
 \ \ge\ \frac{|\Delta_2|-C_\eta e^{-2\eta n}}{2\hat R};
 \qquad\text{in particular}\qquad
 c_\infty\ \ge\ \frac{|\Delta_2|}{2\hat R},
\end{equation}
where $\mathcal{R}_n$ is the tail of Lemma~\ref{lem:trace} and $\eta<\gamma_F$.
\end{itemize}
\end{theorem}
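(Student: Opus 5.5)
The idea is to compare the two quantisation rules of Theorem~\ref{thm:quant} and Theorem~\ref{thm:quantSL}, which share the same band function $\lambda(\cdot)$ and differ only in the reduced phase. Write $\alpha_k=\alpha(\lambda_k)$ and $\beta_k=\alpha(\tau_k)$. For $n\ge n_0$ the two rules read
\[
n\alpha_k+\Psi_f(\alpha_k)=\pi k+\mathcal{O}(e^{-\gamma n}), \qquad n\beta_k+\Psi_g(\beta_k)=\pi k+o(n^{-q}).
\]
Both $\Psi_f$ and $\Psi_g$ are smooth on $[0,\pi]$; analyticity of $r$ (Theorem~\ref{thm: exact characterisation}) together with the smoothness of the Wiener--Hopf factors for $g\in\mathrm{SL}^\beta$ should give uniformly bounded derivatives. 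The map $\alpha\mapsto n\alpha+\Psi(\alpha)$ is therefore strictly increasing with derivative $n+\mathcal{O}(1)$. Subtracting the two rules and applying the mean value theorem gives
\[
(n+\mathcal{O}(1))(\alpha_k-\beta_k)=\Psi_g(\beta_k)-\Psi_f(\alpha_k)+o(n^{-q}) .
\]
This yields first $|\alpha_k-\beta_k|=\mathcal{O}(n^{-1})$, and then, by Lipschitz continuity of $\Psi_f$, $\alpha_k-\beta_k=-\Delta\Psi(\alpha_k)/n+\mathcal{O}(n^{-2})$ uniformly in $k$. The sign convention must be checked against the ordering so that it matches \eqref{eq:secondorder}. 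One then has to confirm that the index $k$ labels the matched pair, i.e.\ that the optimal permutation $\varsigma$ pairs eigenvalues with the same quantum number. Here I would use that both families are sorted monotonically along $\lambda(\cdot)$. For monotone sequences on the real line, the $\ell^1$-optimal matching is the sorted one (the $\lambda_k$ are real for large $n$ by the quantisation rule, which places one eigenvalue near each real sample). Hence the sorted pairing coincides with the same-$k$ pairing.

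For part (i), a Taylor expansion gives $\lambda_k-\tau_k=\lambda(\alpha_k)-\lambda(\beta_k)=\lambda'(\alpha_k)(\alpha_k-\beta_k)+\mathcal{O}(n^{-2})$, using $\lambda\in C^2$ from Lemma~\ref{cor:band function}. This yields \eqref{eq:secondorder}, and the sup bound follows immediately. The main obstacle is uniformity near the band edges $\alpha\in\{0,\pi\}$. There $\lambda'$ vanishes and the results from the literature are stated with constants that must be shown uniform in $k$. I would first rely on the stated uniformity in Theorems~\ref{thm:quant} and \ref{thm:quantSL}. I would then invoke Lemma~\ref{lem:H5}, which presumably supplies the needed regularity or uniform bounds on $\Delta\Psi$ and $\Psi_g'$ up to the endpoints. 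Any residual edge estimate could be handled by noting that $\Delta\Psi(0)=\Delta\Psi(\pi)=0$, which makes the leading term small there.

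For part (ii), summing \eqref{eq:secondorder} in absolute value gives
\[
d_\sigma=\frac1n\sum_k|\lambda'(\alpha_k)\Delta\Psi(\alpha_k)|+\mathcal{O}(n^{-1}).
\]
The $\alpha_k$ satisfy $\alpha_k=\pi k/n+\mathcal{O}(1/n)$ with spacing $\pi/n+\mathcal{O}(n^{-2})$, so this sum is a Riemann sum for $c_\infty$. The function $|\lambda'\Delta\Psi|$ is Lipschitz (absolute value of a $C^1$ function), so the Riemann error is $\mathcal{O}(n^{-1})$. The change of variables $\lambda=\lambda(\alpha)$, which is a monotone bijection, gives the second expression in \eqref{eq:cinf}. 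The final inequality is trivial.

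For part (iii), write
\[
\operatorname{tr}\mathbf{T}_n(f_m)^2-\operatorname{tr}\mathbf{T}_n(g)^2=\sum_k\lambda_k^2-\sum_k\tau_{\varsigma(k)}^2=\sum_k(\lambda_k-\tau_{\varsigma(k)})(\lambda_k+\tau_{\varsigma(k)}).
\]
Then bound $|\lambda_k+\tau_{\varsigma(k)}|\le2\hat R$ by the spectral radius bound following \eqref{eq:RHat}, and apply Lemma~\ref{lem:trace}. Letting $n\to\infty$ and using (ii) gives $c_\infty\ge|\Delta_2|/(2\hat R)$.
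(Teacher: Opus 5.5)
Your proposal follows the paper's proof essentially step by step. For (i) you subtract the two quantisation rules, use the Lipschitz bounds on $\Psi_f,\Psi_g$ from Lemma~\ref{lem:H5}(iii), and apply the mean value theorem to $\lambda$. These bounds hold on the closed interval $[0,\pi]$, so no separate edge analysis is needed. Part (ii) is the same Riemann-sum argument, and part (iii) is the same trace identity combined with $|\lambda_k+\tau_{\varsigma(k)}|\le 2\hat R$. Your remark that the $\ell^1$-optimal matching of two real spectra is the sorted one is a useful addition: it makes explicit why $\lambda_k=\mu_{\varsigma(k)}$ can be identified with the $k$-th quantised eigenvalue, which the paper leaves implicit.

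There is one slip in the sign. Subtracting the rules gives $n(\alpha_k-\beta_k)=\Psi_g(\beta_k)-\Psi_f(\alpha_k)+o(n^{-q})=\Delta\Psi(\alpha_k)+\mathcal{O}(n^{-1})$. Hence $\alpha_k-\beta_k=+\Delta\Psi(\alpha_k)/n+\mathcal{O}(n^{-2})$. There is no sign convention left to check, and this gives \eqref{eq:secondorder} directly.
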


\begin{proof}
We prove each assertion of the theorem separately: 

\begin{itemize}
    \item[(i)] By Theorem~\ref{thm:quant} applied to the banded symbol $f_m$ and Theorem~\ref{thm:quantSL} applied to the dense symbol $g$, both spectra are real and simple for $n\ge n_0$ and, in reduced form,
\begin{align}
    n\alpha(\lambda_k)+\Psi_f(\alpha(\lambda_k)) &=\pi k+\mathcal{O}(e^{-\gamma n})\label{eq: phase f} \\ n\alpha(\tau_k)+\Psi_g(\alpha(\tau_k)) &=\pi k+o(n^{-q}),\label{eq: phase g}
\end{align}
for any $q>0$, and for $k = 1, \dots, n$. Let us define $\delta_k:=\alpha(\lambda_k)-\alpha(\tau_k)$.
Subtracting \eqref{eq: phase f} and \eqref{eq: phase g}, and using that $\Psi_f$ and $\Psi_g = \Psi_f + \Delta \Psi$ are Lipschitz on the compact domain $[0, \pi]$ by Lemma~\ref{lem:H5} (iii), yields
\begin{align}\label{eq:alphadiff}
    n\bigl[\alpha(\lambda_k)-\alpha(\tau_k)\bigr]&=\Psi_g\big(\alpha(\tau_k)\big)-\Psi_f\big(\alpha(\lambda_k)\big) + o(n^{-q})\\
    &=\Psi_g\big(\alpha(\lambda_k)-\delta_k\big)-\Psi_f\big(\alpha(\lambda_k)\big) + o(n^{-q}) \\
    &=\Psi_g\big(\alpha(\lambda_k)\big)-\Psi_f\big(\alpha(\lambda_k)\big) + \mathcal{O}(n^{-1})\\
    &= \Delta\Psi\big(\alpha(\lambda_k)\big) + \mathcal{O}(n^{-1}),
\end{align}
where the above computations hold uniformly in $k$. By Lemma~\ref{cor:band function} we apply the mean value theorem to $\lambda(\cdot)$ on the segment between $\alpha_k=\alpha(\lambda_k)$ and $\alpha(\tau_k)$, which gives $\lambda_k-\tau_k=\lambda'(\eta_k)\delta_k$ for some $\eta_k$ with $|\eta_k-\alpha_k|\le|\delta_k|\le Cn^{-1}$. In particular because of Lemma~\ref{cor:band function} $\lambda'$ is Lipschitz on any compact domain, so $\lambda'(\eta_k)=\lambda'(\alpha_k)+\mathcal O(n^{-1})$, whence
\begin{equation}
 \lambda_k-\tau_k=\frac{\lambda'(\alpha_k)\Delta\Psi(\alpha_k)}{n} +\mathcal O\bigl(n^{-2}\bigr) \qquad\text{uniformly in }k,
\end{equation}
and taking the supremum over $\alpha_k\in(0,\pi)$ yields the first statement of (i). Near the edges the small factor $|\lambda'|$ only strengthens the pointwise bound.

\item[(ii)] Write $h:=|\lambda'||\Delta\Psi|$. By Lemma~\ref{cor:band function}, $h$ is Lipschitz on the closed interval
$[0,\pi]$. By (i), uniformly in $k$,
\begin{equation}\label{eq:dsigmasum}
  d_\sigma\bigl(\mathbf{T}_n(f_m),\mathbf{T}_n(g)\bigr)
  =\frac1n\sum_{k=1}^{n}h(\alpha_k)+\mathcal{O}\bigl(n^{-1}\bigr).
\end{equation}
Since $\Psi_f(0)=0$ and $\Psi_f(\pi)=\pi$, for $n>\|\Psi_f'\|_\infty$ the map
$\Phi_n=n\mathrm{id}+\Psi_f$ is an increasing bijection of $[0,\pi]$ onto $[0, (n+1)\pi]$, and by
\eqref{eq:quant} the nodes $\alpha_k=\Phi_n^{-1}(\pi k)+\mathcal{O}(e^{-\gamma n})$
have consecutive gaps $\pi/n+\mathcal{O}(n^{-2})$, uniformly in $k$. Hence
\eqref{eq:dsigmasum} is a Riemann sum for a Lipschitz integrand with mesh
$\pi/n$, and
\begin{equation}
  \frac1n\sum_{k=1}^{n}h(\alpha_k)=\frac1\pi\int_0^\pi h\dx\alpha
  +\mathcal{O}\bigl(n^{-1}\bigr),
\end{equation}
which gives $\bigl|d_\sigma-c_\infty\bigr|\le Cn^{-1}$. The equality of the two integrals in \eqref{eq:cinf} is the substitution $\dx\lambda=\lambda'(\alpha)\dx\alpha$, and the final inequality uses $\int_0^\pi|\lambda'|\dx\alpha=B-A$.

\item[(iii)] By Lemma~\ref{lem:trace} and
$\sum_k\lambda_k^2=\operatorname{tr}\mathbf{T}_n(f_m)^2$,
$\sum_k\tau_k^2=\operatorname{tr}\mathbf{T}_n(g)^2$,
\begin{equation}
    \Bigl|\sum_k\bigl(\lambda_k^2-\tau_{\varsigma(k)}^2\bigr)\Bigr| = \bigl|\Delta_2-\mathcal{R}_n\bigr|, \qquad \bigl|\lambda_k^2-\tau_{\varsigma(k)}^2\bigr|\le 2\hat R\bigl|\lambda_k-\tau_{\varsigma(k)}\bigr|,
\end{equation}
which yields the result.
\end{itemize}
\end{proof}

Theorem~\ref{thm:cinf} gives an upper and a lower bound for the eigenvalue error of the transform. Pointwise, every eigenvalue is reproduced to $\mathcal{O}(1/n)$, with the exact first-order error $\lambda'\Delta\Psi$. Globally, the $\ell^1$ distance does \emph{not} tend to zero but to the constant $c_\infty$, which is positive unless $\Delta\Psi \equiv 0$ (as happens for $m = 1$), and therefore measures how far the transform is from a similarity. An improvement on the spectral distance will be presented in the following section.

\subsection{The first-order correction}\label{sec:firstorder}

Theorem~\ref{thm:cinf} shows that the symmetrisation $\mathbf{T}_n(f_m)\mapsto\mathbf{T}_n(g)$ is exact only to leading order in the small variable $1/n$ as $n \to \infty$.  
The starting point is the structure of the two quantisation rules \eqref{eq: phase f} and \eqref{eq: phase g}. Both spectra are read off from the same band function $\lambda(\cdot)$ as by construction $\lambda(\alpha)=g(e^{i\alpha})$. What differs is only the reduced phase, $\Psi_f$ versus $\Psi_g$ given by \eqref{eq:Psif} and \eqref{eq:Psig} respectively. Their difference is the phase mismatch $\Delta\Psi(\alpha):= \Psi_g(\alpha)-\Psi_f(\alpha)$ of \eqref{eq:mismatch}. 
Subtracting \eqref{eq:PsifWidom} from \eqref{eq:Psig}, with $z_1,\dots,z_{2m}$ as in Theorem~\ref{thm: schmidt spitzer magnitude} and $\zeta=z_{m}$ the central root in the upper half-plane, the phase mismatch \eqref{eq:mismatch} reads
\begin{equation}\label{eq:epsexplicit}
  \Delta\Psi(\alpha)= \eta_g(\alpha) +\frac12\sum_{i=1}^{m-1}\arg\left(\frac{1-z_i/\zeta}{1-z_i/\overline\zeta}\right) +\frac12\sum_{j=m+2}^{2m}\arg\left(\frac{1-\bar\zeta/z_j}{1-\zeta/z_j}\right),
\end{equation}
evaluated at $\lambda=\lambda(\alpha)$, with the branch fixed by $\Delta\Psi(0)=0$. 
The heuristic is to eliminate the phase mismatch by adjusting the parametrisation of the GBZ. The discrepancy in \eqref{eq:alphadiff} motivates us to define the first order phase-corrected parametrisation of the GBZ, 
\begin{equation}\label{eq:pn}
  p_n(e^{\i\alpha}):=p\Big(e^{\i\left(\alpha+\Delta\Psi(\alpha)/n\right)}\Big).
\end{equation}
In particular, the reparametrisation of the GBZ is now size-dependent and the corresponding symbol is given by
\begin{equation}\label{eq: phase-corrected symbol}
    g_n(e^{\i\alpha}):=f_m\circ p_n(e^{\i\alpha}) =\lambda\Big(\alpha+\frac{\Delta\Psi(\alpha)}{n}\Big).
\end{equation}

As the next result shows, the phase-correction to the parametrisation of the generalised Brillouin zone leads to a perturbation of the Toeplitz matrix.
\begin{lemma}\label{lem:struct}
    Under Assumption~\ref{ass:standing} and $\sigma_{\mathrm{open}} \subset \R$, the function $\Delta\Psi$ is continuous on $[0,\pi]$ and $\Delta\Psi(0)=\Delta\Psi(\pi)=0$. Moreover, its odd $2\pi$-periodic extension is continuous on $\mathbb{R}$. Consequently $g_n$ is real-valued and even, so $\mathbf{T}_n(g_n)$ is a real symmetric Toeplitz matrix with,
    \begin{equation}\label{eq:additivematrix}
      \mathbf{T}_n(g_n)=\mathbf{T}_n(g)+\frac1n\mathbf{T}_n(\lambda'\Delta\Psi)+E_n, \quad \|E_n\|_2\le Cn^{-2}.
    \end{equation}
\end{lemma}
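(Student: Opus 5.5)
The plan is to establish the regularity and symmetry of $\Delta\Psi$ first, and then obtain \eqref{eq:additivematrix} from a second-order Taylor expansion of $\lambda$ inside the symbol, combined with the bound $\|\mathbf{T}_n(h)\|_2\le\|h\|_{L^\infty(\mathbb{T})}$, which holds for any bounded symbol.

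\emph{Continuity and vanishing at the endpoints.} On $(0,\pi)$ the roots $z_i$, $z_j$ and $\zeta$ depend continuously on $\lambda=\lambda(\alpha)$. The Blaschke-type sums in \eqref{eq:epsexplicit} are symmetric functions of the inner and outer root groups, so they are continuous there. By Assumption~\ref{ass:standing}, $|z_i/\zeta|<1$ and $|\zeta/z_j|<1$ uniformly, so every argument stays away from the branch cut. The Wiener--Hopf factors $G^\pm_\alpha$ depend continuously on $\alpha$ in the Wiener norm. Indeed, $G_\alpha$ is nonvanishing with winding number zero, and it depends continuously on $\alpha$, including at $\alpha\to0,\pi$, where the double zero is divided out by $z+z^{-1}-2\cos\alpha$ and $\lambda''\neq0$ by Lemma~\ref{cor:band function}. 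Hence $\eta_g$ is continuous on $[0,\pi]$. As $\zeta$ becomes real, each term in \eqref{eq:epsexplicit} vanishes: the Blaschke arguments are arguments of $w/\bar w$ with $w\to\bar w$, and $\eta_g(0)=\eta_g(\pi)=0$ was already noted. This gives $\Delta\Psi(0)=\Delta\Psi(\pi)=0$. Set $\Delta\Psi(-\alpha):=-\Delta\Psi(\alpha)$ and extend $2\pi$-periodically. This is continuous at $0$ and at $\pm\pi$ precisely because of the vanishing there.

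\emph{Evenness and reality.} Since $\lambda$ is even, because $r$ is even and $f_m$ has real coefficients, and $\Delta\Psi$ is odd, we get $g_n(e^{-\i\alpha})=\lambda(-\alpha-\Delta\Psi(\alpha)/n)=\lambda(\alpha+\Delta\Psi(\alpha)/n)=g_n(e^{\i\alpha})$. Also $\lambda$ is real-valued, since $\lambda(\alpha)\in\sigma_{\mathrm{open}}\subset\R$ by Theorem~\ref{Thm: real spectra criterion}. A real even symbol has real Fourier coefficients satisfying $\hat g_{n,k}=\hat g_{n,-k}$, so $\mathbf{T}_n(g_n)$ is real symmetric.

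\emph{The expansion.} Taylor's theorem with Lagrange remainder gives, for each $\alpha$,
\begin{equation}
g_n(e^{\i\alpha})=\lambda(\alpha)+\frac{\lambda'(\alpha)\Delta\Psi(\alpha)}{n}+\frac{\lambda''(\xi_\alpha)\Delta\Psi(\alpha)^2}{2n^2}.
\end{equation}
Here $\lambda$ is real-analytic and $2\pi$-periodic, so $\|\lambda''\|_\infty<\infty$, and $\Delta\Psi$ is bounded by continuity. The remainder symbol $e_n$ is therefore bounded in sup norm by $Cn^{-2}$. By linearity of $h\mapsto\mathbf{T}_n(h)$ we obtain \eqref{eq:additivematrix} with $E_n=\mathbf{T}_n(e_n)$. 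The bound $\|\mathbf{T}_n(e_n)\|_2\le\|e_n\|_\infty\le Cn^{-2}$ holds because $\mathbf{T}_n(h)$ is a compression of the multiplication operator by $h$ on $L^2(\mathbb{T})$. This step uses only $L^\infty$ symbols and needs no Wiener regularity. The symbol $\lambda'\Delta\Psi$ is continuous and even, as a product of two odd functions, so $\mathbf{T}_n(\lambda'\Delta\Psi)$ is well defined and symmetric.

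The main obstacle is the continuity of $\eta_g$ up to the endpoints. At $\alpha\to0,\pi$ the two zeros $e^{\pm\i\alpha}$ of $g-\lambda(\alpha)$ merge into a double zero. One has to check that $G_\alpha\to G_0$ uniformly and without vanishing, which follows from $\lambda''(\theta_e)\neq0$ together with a divided-difference argument. Continuity of the factorisation then follows from continuity of the logarithm and of the Riesz projection on a Wiener-type algebra, and I expect this to be quotable from \cite{BOGOYA_SLM}.
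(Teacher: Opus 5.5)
Your proposal is correct and follows essentially the same route as the paper. Both arguments take the same steps: $\Delta\Psi(0)=\Delta\Psi(\pi)=0$ because $\Psi_f$ and $\Psi_g$ agree at the endpoints; $g_n$ is even because $\Delta\Psi$ is odd and $\lambda$ is even; a second-order Taylor expansion gives a remainder of size $\|\lambda''\|_\infty\|\Delta\Psi\|_\infty^2/(2n^2)$; and the compression bound $\|\mathbf{T}_n(h)\|_2\le\|h\|_{L^\infty(\mathbb{T})}$ transfers this to the matrices. The one difference is that you explicitly justify the continuity of $\eta_g$ up to the band edges through the divided-difference form of $G_\alpha$. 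The paper instead builds that continuity into the definition of $\Psi_g$ and only makes it rigorous later, by the same idea, in Lemma~\ref{lem:H5}(iii).
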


\begin{proof}
Since $\Psi_f$ and $\Psi_g$ agree at $0$ and $\pi$ we have $\Delta\Psi(0)=\Delta\Psi( \pi)=0$, so the odd $2\pi$-periodic extension is continuous. Since $\Delta\Psi$ is odd and $\lambda$ is even, $\alpha\mapsto\alpha+\Delta\Psi(\alpha)/n$ is odd and $g_n(e^{-\i\alpha})=\lambda(-\alpha-\Delta\Psi(\alpha)/n)=g_n(e^{\i\alpha})$, hence $\widehat{(g_n)}_k=\widehat{(g_n)}_{-k}\in\mathbb{R}$. To show the matrix expansion in \eqref{eq:additivematrix} we may proceed as follows. Taylor expanding the phase-corrected symbol given by \eqref{eq: phase-corrected symbol} gives,
\begin{equation}\label{eq:symbolexpansion}
  g_n  =  g  +  \frac1n\lambda'\Delta\Psi  +  R_n, \qquad \|R_n\|_{L^\infty(\mathbb{T})}\le\frac{\|\lambda''\|_\infty\|\Delta\Psi\|_\infty^2}{2n^{2}} \leq Cn^{-2},
\end{equation}
where we used $\lambda(\alpha)=g(e^{\i\alpha})$. Since $h\mapsto\mathbf{T}_n(h)$ is linear and $\mathbf{T}_n(h)=P_nL(h)P_n$ is a compression of the Laurent operator, so that $\|\mathbf{T}_n(h)\|_2\le\|h\|_{L^\infty(\mathbb{T})}$, this transfers to the matrices giving \eqref{eq:additivematrix}.
The function $\lambda'\Delta\Psi$ is real and even, being the product of the odd functions $\lambda'$ and $\Delta\Psi$, so $\mathbf{T}_n(\lambda'\Delta\Psi)$ is again real symmetric. Both matrices in \eqref{eq:additivematrix} are real symmetric, which completes the proof.
\end{proof}

The correction (\ref{eq:pn}) is tangential. For $n > \lVert \Delta\Psi'\rVert_\infty$ it composes $p$ with a diffeomorphism of the torus and therefore leaves the image $p(\mathbb{T})=\mathbf{Z}(f_m)$ unchanged, altering only its parametrisation.
Applying \Cref{thm:quantSL} to the corrected symbol requires a uniformity statement, since $g_n$ varies with $n$.
Throughout, $\Psi_{g_n}$ is defined by \eqref{eq:Gdef}--\eqref{eq:Psig} with $g$ replaced by $g_n$, that is, with $\lambda(\alpha)$ replaced by $g_n(e^{\i\alpha})=\lambda(\chi_n(\alpha))$, where $\chi_n:=\mathrm{id}+\Delta\Psi/n$.

\begin{lemma}\label{lem:H5}
Suppose Assumption~\ref{ass:standing} holds and $\sigma_{\mathrm{open}}\subset\R$. Then there is $n_1\in\N$ such that:
\begin{enumerate}
\item[(i)] the quantisation rule of \Cref{thm:quantSL} holds for the family $\{g_n\}_{n\ge n_1}$, with
  remainder $C_qn^{-q}$ where $C_q$ is independent of $n$;
\item[(ii)] $\|\Psi_{g_n}-\Psi_g\|_{L^\infty([0,\pi])}\le Cn^{-1}$;
\item[(iii)] there is $\eta>0$ such that $\Psi_f-\mathrm{id}$ and $\Delta\Psi$ extend analytically and
  boundedly to $S_\eta=\{|\Im\alpha|<\eta\}$, where they are odd and $2\pi$-periodic; in particular both
  are Lipschitz on $[0,\pi]$.
\end{enumerate}
\end{lemma}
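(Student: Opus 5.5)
I would prove (iii) first, since (i) and (ii) rest on the analyticity it supplies.

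\emph{Analyticity of $\Psi_f$.} By Theorem~\ref{thm: exact characterisation} and Lemma~\ref{cor:band function}, $\zeta(\alpha)=r(\alpha)e^{\i\alpha}$ and $\lambda(\alpha)$ are real-analytic and $2\pi$-periodic. Assumption~\ref{ass:standing} gives the uniform gap $\gamma>0$, so the inner and outer root groups of $z^m(f_m(z)-\lambda)$ stay separated from $\zeta,\bar\zeta$ for all $\lambda\in[A,B]$. Hence the symmetric functions of each group are analytic in $\lambda$ on a complex neighbourhood of $[A,B]$. The factors in \eqref{eq:PsifWidom} can be written as
\[
\prod_i(1-z_i/\zeta)=\zeta^{-(m-1)}Q_{\mathrm{in}}(\zeta),
\]
where $Q_{\mathrm{in}}$ is the monic inner factor, and similarly for the outer group. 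Their logarithms are therefore analytic functions of $(\lambda,\zeta)$ near the band, because $|z_i/\zeta|\le e^{-\gamma}$. Composing with $\alpha\mapsto(\lambda(\alpha),\zeta(\alpha))$ and replacing $\bar\zeta$ by the analytic continuation $\zeta^*(\alpha):=r(\alpha)e^{-\i\alpha}=\zeta(-\alpha)$ yields an analytic extension of $\Psi_f-\mathrm{id}$ to a strip $S_\eta$. Oddness follows because $\alpha\mapsto-\alpha$ swaps $\zeta$ and $\zeta^*$, which changes the sign of each argument. Periodicity is inherited from $r$.

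\emph{Analyticity of $\eta_g$.} Since $g$ is analytic on an annulus and $\lambda(\alpha)$ is analytic, I would write $G_\alpha$ as a jointly analytic function of $(z,\alpha)$ on an annulus times a strip. The removable singularity at $z=e^{\pm\i\alpha}$ stays removable under complexification of $\alpha$, because $g(z)-\lambda(\alpha)$ vanishes on $z=e^{\pm\i\alpha}$ identically in $\alpha$. I also need $G_\alpha\neq0$ on $\mathbb{T}$, including at $z=e^{\pm\i\alpha}$. For $\alpha\in(0,\pi)$ this holds by simple monotonicity, since $\lambda'\neq0$. At the edges $\alpha\in\{0,\pi\}$ the double zero gives $G_\alpha(e^{\i\theta_e})=\lambda''(\theta_e)/(\mathrm{const})\neq0$ by Lemma~\ref{cor:band function}. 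Positivity then persists on a thin strip by compactness. The Wiener--Hopf factors are
\[
G^\pm_\alpha=\exp\bigl(P_\pm\log G_\alpha\bigr),
\]
where $P_\pm$ are the Riesz projections. These factors, and their values at $e^{\i\alpha}$ (continued analytically off the circle), depend analytically on $\alpha$. So $\eta_g$, and hence $\Delta\Psi$, extends to $S_\eta$; oddness comes from the evenness of $\log|G_\alpha|$ already noted. Bounded analytic extension on a strip gives the Lipschitz property. I expect this edge nonvanishing and uniform analyticity of $G_\alpha$ in the complexified $\alpha$ to be the main technical obstacle.

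\emph{Part (ii).} The symbol $g_n=\lambda\circ\chi_n$ with $\chi_n=\mathrm{id}+\Delta\Psi/n$ is, by (iii), an analytic perturbation of $g$ of size $\mathcal{O}(1/n)$ in a fixed annulus. The associated $G^{(n)}_\alpha$ is built with $\lambda(\alpha)$ replaced by $\lambda(\chi_n(\alpha))$, with the quotient taken by the zeros of $g_n-g_n(e^{\i\alpha})$, which are still at $e^{\pm\i\alpha}$. It therefore differs from $G_\alpha$ by $\mathcal{O}(1/n)$ uniformly in $\alpha$ and in the annulus. This uses a Cauchy estimate on the difference quotient, and near the edges the fact that $\chi_n$ fixes $0$ and $\pi$. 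Since the Riesz projection is bounded on analytic functions of a slightly smaller annulus, $\log G^{(n)\pm}_\alpha-\log G^\pm_\alpha=\mathcal{O}(1/n)$. This gives $\|\Psi_{g_n}-\Psi_g\|_\infty\le C/n$.

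\emph{Part (i).} For $n\ge n_1$ large, the argument above shows that the family $\{g_n\}$ lies in a bounded subset of $\mathrm{SL}^\beta$. The Fourier decay is uniform, since the $g_n$ are uniformly analytic in a fixed annulus. The quantities $\inf_{(0,\pi)}|g_n'|/\sin$-type nondegeneracy and $|g_n''|$ at $0$ and $\pi$ are bounded away from zero, because $\chi_n'=1+\mathcal{O}(1/n)$. I would then invoke the observation that the constants in \cite[Theorem 2.1]{BOGOYA_SLM} depend only on these norms and nondegeneracy bounds. Tracing this dependence yields a remainder $C_qn^{-q}$ with $C_q$ uniform in $n$.
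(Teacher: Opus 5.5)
Your proposal is correct and follows the paper's proof essentially step by step. The steps that match are: the root-group logarithms with $\bar\zeta$ continued as $\zeta(-\alpha)$ for $\Psi_f$; the Fourier/Riesz splitting of $\log(\sigma G_\alpha)$ on a thin strip in both variables for $\eta_g$; a Cauchy estimate on the divided difference for (ii); and uniform $\mathrm{SL}^\beta$ bounds for (i). The only difference is cosmetic: the paper writes $\lambda=\Lambda\circ\cos$ and $G_\alpha(e^{\i\theta})=\tfrac12\int_0^1\Lambda'\bigl((1-t)\cos\alpha+t\cos\theta\bigr)\dx t$, which gives joint analyticity and $\operatorname{Re}\sigma G_\alpha\ge c_1$ in one stroke (including where the two zero sets cross at $\alpha\in\{0,\pi\}$), whereas you obtain the same through pointwise nonvanishing checks plus compactness.
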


\begin{proof}
By \Cref{thm: exact characterisation}, $\zeta(\alpha)=r(\alpha)e^{\i\alpha}$ and $\lambda=f_m\circ\zeta$ are analytic and bounded on $S_{4\eta}$ for some $\eta>0$.

(iii) For $\alpha\in S_{2\eta}$ the roots of $z^m(f_m(z)-\lambda(\alpha))$ are $\zeta(\pm\alpha)$, the inner roots $z_1,\dots,z_{m-1}$ and the outer roots $z_{m+2},\dots,z_{2m}$. By \eqref{eq:strictsep} and compactness, after shrinking $\eta$ we have $|z_i/\zeta|\le e^{-\gamma/2}$ and $|\zeta/z_j|\le e^{-\gamma/2}$ there. Symmetric functions of each group are analytic. Here $\operatorname{Log}$ is the principal branch of the logarithm, which is analytic on $\operatorname{Re} w>0$ and hence on all arguments below, since $|z_i/\zeta|,|\zeta/z_j|<1$. Thus
\begin{equation}
  \Theta(\alpha):=\sum_{i=1}^{m-1}\operatorname{Log}\Bigl(1-\frac{z_i}{\zeta}\Bigr)
  -\sum_{j=m+2}^{2m}\operatorname{Log}\Bigl(1-\frac{\zeta}{z_j}\Bigr)
  =-\sum_{p\ge1}\frac1p\Bigl(\sum_i\bigl(\tfrac{z_i}{\zeta}\bigr)^p-\sum_j\bigl(\tfrac{\zeta}{z_j}\bigr)^p\Bigr)
\end{equation}

is analytic and bounded on $S_{2\eta}$. For real $\alpha$ we have $\zeta(-\alpha)=\bar\zeta(\alpha)$, so $\overline{\Theta(\alpha)}=\Theta(-\alpha)$, and \eqref{eq:PsifWidom} reads $\Psi_f(\alpha)-\alpha=\tfrac{\i}{2}\bigl(\Theta(\alpha)-\Theta(-\alpha)\bigr)$. This gives the odd, periodic extension of $\Psi_f-\mathrm{id}$.

For $\Psi_g-\mathrm{id}=\eta_g$, write $\lambda=\Lambda\circ\cos$. Then $G_\alpha(e^{\i\theta})=\tfrac12\int_0^1\Lambda'\bigl((1-t)\cos\alpha+t\cos\theta\bigr)\dx t$, and $\Lambda'(\cos\theta)=-\lambda'(\theta)/\sin\theta$ has a fixed sign $\sigma$ and satisfies $\sigma\Lambda'\ge 2c_1>0$ by \Cref{cor:band function}. Hence, after shrinking $\eta$, the function $(\theta,\alpha)\mapsto G_\alpha(e^{\i\theta})$ is analytic on $S_{2\eta}^2$ with $\operatorname{Re}\sigma G_\alpha\ge c_1$, and $\log(\sigma G_\alpha(e^{\i\theta})) =\sum_k c_k(\alpha)e^{\i k\theta}$ is analytic and bounded by some $M$ there. Then $|c_k(\alpha)|\le Me^{-2\eta|k|}$, and
\begin{equation}
  \eta_g(\alpha)=-\i\sum_{k}\operatorname{sgn}(k)c_k(\alpha)e^{\i k\alpha}
\end{equation}
converges uniformly on $S_\eta$, which gives the analytic extension. Since $G_\alpha$ is even in $\theta$ and in $\alpha$, $\eta_g$ is odd. Hence $\Delta\Psi=\eta_g-(\Psi_f-\mathrm{id})$ has the claimed properties, and Lipschitz continuity on $[0,\pi]$ follows from Cauchy's estimate.

(i) By (iii), for $n\ge n_1$ the map $\chi_n$ sends $S_{\eta/2}$ into $S_\eta$ and satisfies $\tfrac12\le\chi_n'\le\tfrac32$ on $\R$. Hence $g_n$ is analytic on $S_{\eta/2}$ with $|g_n|\le\|\lambda\|_{2\eta}$ there, so $|\widehat{(g_n)}_k|\le\|\lambda\|_{2\eta}e^{-\eta|k|/2}$ and $\|g_n\|_{W^\beta}$ is bounded uniformly in $n$ for every $\beta>0$. Since $\chi_n$ is odd and fixes $0$ and $\pi$, the bound $|\lambda'(\alpha)|\ge c|\sin\alpha|$ from \Cref{cor:band function} gives $|\tfrac{\dx}{\dx\alpha}g_n(e^{\i\alpha})|\ge c'|\sin\alpha|$. Moreover $\tfrac{\dx^2}{\dx\alpha^2}g_n(e^{\i\alpha})\big|_{\alpha_e}=\lambda''(\alpha_e)\chi_n'(\alpha_e)^2$ for $\alpha_e\in\{0,\pi\}$, which is bounded away from $0$. Thus $g_n\in\mathrm{SL}^\beta$ uniformly in $n$ for every $\beta>0$, and the remainder in \cite[Theorem~2.1]{BOGOYA_SLM} depends on the symbol only through these bounds.

(ii) Let $G^{(n)}_\alpha$ denote \eqref{eq:Gdef} for $g_n$. Then $G^{(n)}_\alpha(e^{\i\theta})-G_\alpha(e^{\i\theta})=\frac{d(\theta)-d(\alpha)}{2(\cos\theta-\cos\alpha)}$ with $d(\alpha):=g_n(e^{\i\alpha})-g(e^{\i\alpha})$, which is even and satisfies $\|d\|_{\eta/2}\le\|\lambda'\|_\eta\|\Delta\Psi\|_\eta/n$. Writing $d=D\circ\cos$, Cauchy's formula for the divided difference of $D$ on the ellipse $\cos(S_{\eta/2})$ bounds this difference by $C/n$ on $S_{\eta/4}^2$. Hence, for $n\ge n_1$, $\operatorname{Re}\sigma G^{(n)}_\alpha\ge c_1/2$ and the logarithms differ by at most $C/n$ there. The coefficient bound and series of (iii), applied on $S_{\eta/4}^2$, give $\|\Psi_{g_n}-\Psi_g\|_{\eta/8}\le C/n$, and in particular (ii).
\end{proof}

We are now ready to state the main theorem of this section which establishes that the phase-corrected symbol achieves a higher pointwise eigenvalue convergence.
\begin{theorem}\label{thm:corrected}
    Suppose that Assumption~\ref{ass:standing} holds and that $\sigma_{\mathrm{open}}\subset \R$. Let $\tau^{(n)}_1\le\dots\le\tau^{(n)}_n$ be the eigenvalues of the real symmetric matrix $\mathbf{T}_n(g_n)$, where the symbol function $g_n$ was defined in \eqref{eq: phase-corrected symbol}. Then there is $n_0$ such that for all $n\ge n_0$ it holds that:
    \begin{enumerate}
        \item[(i)] There exists a constant $C=C(f_m)$ independent of $k$, such that
        \begin{equation}\label{eq:pointwisecorr}
          \big|\lambda_k-\tau^{(n)}_k\big|\le \frac{C}{n^{2}} \qquad\text{uniformly in }k\in\{1,\dots,n\};
        \end{equation}
        \item[(ii)] The $\ell^1$ spectral distance introduced in \eqref{eq:dsig} satisfies,        \begin{equation}\label{eq:l1corr}
          d_\sigma\big(\mathbf{T}_n(f_m),\mathbf{T}_n(g_n)\big)\le Cn^{-1}.
        \end{equation}
    \end{enumerate}
\end{theorem}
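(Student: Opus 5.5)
The proof will follow the structure of Theorem~\ref{thm:cinf}(i), now applied to the corrected family $\{g_n\}$, with Lemma~\ref{lem:H5} supplying the uniformity. First we write down the quantisation rule for $\mathbf{T}_n(g_n)$. By Lemma~\ref{lem:H5}(i), for $n\ge n_1$ the eigenvalues $\tau^{(n)}_k$ are simple, and there is exactly one per index $k$. A subtlety is that $g_n(e^{\i\alpha})=\lambda(\chi_n(\alpha))$, so the band function of $g_n$ is $\lambda\circ\chi_n$ rather than $\lambda$. We therefore set $\beta_k:=\chi_n^{-1}(\alpha(\tau^{(n)}_k))$, the momentum in the parametrisation of $g_n$. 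The rule then reads
\begin{equation}
 n\beta_k+\Psi_{g_n}(\beta_k)=\pi k+\mathcal{O}(n^{-q}),
\end{equation}
uniformly in $k$.

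Next we translate this back to the momentum $\alpha(\tau^{(n)}_k)=\chi_n(\beta_k)=\beta_k+\Delta\Psi(\beta_k)/n$. We get
\begin{equation}
 n\,\alpha(\tau^{(n)}_k)=n\beta_k+\Delta\Psi(\beta_k)=\pi k-\Psi_{g_n}(\beta_k)+\Delta\Psi(\beta_k)+\mathcal{O}(n^{-q}).
\end{equation}
Lemma~\ref{lem:H5}(ii) gives $\Psi_{g_n}=\Psi_g+\mathcal{O}(n^{-1})$ uniformly, and by definition $\Psi_g-\Delta\Psi=\Psi_f$. Hence
\begin{equation}
 n\,\alpha(\tau^{(n)}_k)+\Psi_f(\beta_k)=\pi k+\mathcal{O}(n^{-1}).
\end{equation}
Since $|\beta_k-\alpha(\tau^{(n)}_k)|\le\|\Delta\Psi\|_\infty/n$ and $\Psi_f$ is Lipschitz by Lemma~\ref{lem:H5}(iii), we may replace $\Psi_f(\beta_k)$ by $\Psi_f(\alpha(\tau^{(n)}_k))$ at a further cost of $\mathcal{O}(n^{-1})$.

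We then compare with \eqref{eq: phase f} for $\lambda_k$. Subtracting the two rules gives
\begin{equation}
 \Phi_n\bigl(\alpha(\lambda_k)\bigr)-\Phi_n\bigl(\alpha(\tau^{(n)}_k)\bigr)=\mathcal{O}(n^{-1}),\qquad \Phi_n:=n\,\mathrm{id}+\Psi_f .
\end{equation}
For $n>2\|\Psi_f'\|_\infty$ we have $\Phi_n'\ge n/2$, so $|\alpha(\lambda_k)-\alpha(\tau^{(n)}_k)|=\mathcal{O}(n^{-2})$ uniformly in $k$. The ordering is consistent: both rules use the same index $k$, and $\chi_n$ is an increasing bijection of $[0,\pi]$, so the sorted $\tau^{(n)}_k$ pair with the $\lambda_k$ chosen by the matching of \eqref{eq:dsig}. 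For the latter point we argue as in Theorem~\ref{thm:cinf}, where the optimal matching coincides with the monotone one for large $n$. Since $\lambda'$ is bounded by Lemma~\ref{cor:band function}, the mean value theorem yields $|\lambda_k-\tau^{(n)}_k|\le\|\lambda'\|_\infty\,\mathcal{O}(n^{-2})$, which is (i). Summing over the $n$ indices gives (ii) immediately.

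The main obstacle is the uniformity hidden in the $\mathcal{O}(n^{-1})$ error of Lemma~\ref{lem:H5}(ii). That error must be $C/n$ with $C$ independent of $k$ and valid up to the edges $0,\pi$, which requires the analytic control of Lemma~\ref{lem:H5} rather than mere continuity. A second obstacle is tracking the band-function reparametrisation $\chi_n$ correctly. The key cancellation is that the shift $\Delta\Psi(\beta_k)$ coming from $\chi_n$ removes $\Delta\Psi$ from $\Psi_g$, leaving exactly $\Psi_f$. If the $\mathcal{O}(n^{-1})$ phase error fails near the edges, we would fall back on the factor $|\lambda'(\alpha)|\lesssim|\sin\alpha|$ there, which only improves the estimate.
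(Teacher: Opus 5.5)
Your proposal is correct and follows essentially the same route as the paper. The paper also writes $\tau^{(n)}_k=\lambda(\sigma_k)$ with $\sigma_k=\beta_k+\Delta\Psi(\beta_k)/n$ (your $\chi_n(\beta_k)$) and eliminates $n\beta_k$ via the $g_n$-rule, leaving the residual $[\Psi_g-\Psi_{g_n}](\beta_k)+[\Psi_f(\cdot+\Delta\Psi/n)-\Psi_f](\beta_k)=\mathcal{O}(n^{-1})$ by Lemma~\ref{lem:H5}(ii),(iii); it then inverts $n\,\mathrm{id}+\Psi_f$ to get $|\sigma_k-\theta_k|\le Cn^{-2}$ before applying the mean value theorem to $\lambda$, with your remark on the monotone matching being a small point the paper leaves implicit.
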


\begin{proof}
    By \Cref{thm:quant} for $f_m$ and by Theorem~\ref{thm:quantSL} together with Lemma~\ref{lem:H5}(i) for $g_n$, both rules hold for $n\ge n_0$ with the same uniform constants: $\lambda_k=\lambda(\theta_k)$ and $\tau^{(n)}_k=\lambda(\sigma_k)$ with $\sigma_k:=\beta_k+\Delta\Psi(\beta_k)/n$, where
    \begin{align} 
      n\theta_k+\Psi_f(\theta_k)&=\pi k+\mathcal{O}(e^{-\gamma n}),\label{eq: rule f} \\
      n\beta_k+\Psi_{g_n}(\beta_k)&=\pi k+o(n^{-q}), \label{eq: rule gn}
    \end{align}
    for any $q>0$, with exactly one solution per $k$, both enumerations sorted by the same monotone $\lambda(\cdot)$. Substituting $\sigma_k$ into the quantisation rule of $f_m$ given in \eqref{eq: rule f} yields
    \begin{equation}
        n \sigma_k + \Psi_f(\sigma_k) = n \beta_k + \Delta\Psi(\beta_k) + \Psi_f\Bigl(\beta_k + \frac{\Delta\Psi(\beta_k)}{n}\Bigr).
    \end{equation}
    Replacing $n\beta_k$ by means of the quantisation rule \eqref{eq: rule gn} and then substituting the definition of $\Delta\Psi$ from \eqref{eq:mismatch}, we obtain
    \begin{align}
        n\sigma_k+\Psi_f(\sigma_k)
        &=\pi k+o(n^{-q})
          -\Psi_{g_n}(\beta_k)+\Delta\Psi(\beta_k)
          +\Psi_f\Bigl(\beta_k+\frac{\Delta\Psi(\beta_k)}{n}\Bigr)\\
        &=\pi k+o(n^{-q})
          -\Psi_{g_n}(\beta_k)+\Psi_g(\beta_k)-\Psi_f(\beta_k)
          +\Psi_f\Bigl(\beta_k+\frac{\Delta\Psi(\beta_k)}{n}\Bigr)\\
        &=\pi k+o(n^{-q})
          +\underbrace{\bigl[\Psi_g-\Psi_{g_n}\bigr](\beta_k)
          +\bigl[\Psi_f(\cdot+\Delta\Psi/n)-\Psi_f\bigr](\beta_k)}_{=O(n^{-1})
      \text{ by Lemma~\ref{lem:H5}(ii),(iii)}}.
        \label{eq: sigma residual}
    \end{align}
    Subtracting \eqref{eq: rule f} from \eqref{eq: sigma residual} and using that $\Psi_f$ is Lipschitz with constant $L$ by Lemma~\ref{lem:H5}(iii),
    \begin{equation*}
      (n-L)|\sigma_k-\theta_k|\le Cn^{-1}+o(n^{-q}),
    \end{equation*}
    so $|\sigma_k-\theta_k|\le Cn^{-2}$ for $n\ge n_0$, uniformly in $k$. Since $\lambda$ is Lipschitz on $[0,\pi]$ by Lemma~\ref{cor:band function}, 
    \begin{align}
       |\lambda_k-\tau^{(n)}_k|&\le\|\lambda'\|_\infty|\theta_k-\sigma_k| \\
       &\le C\|\lambda'\|_\infty n^{-2},
    \end{align}
    which is \eqref{eq:pointwisecorr}. Summing over $k$ gives
    \eqref{eq:l1corr}.
\end{proof}

Note that with the first-order phase correction taken into account, the spectral distance \eqref{eq:dsig} between $\mathbf{T}_n(f_m)$ and $\mathbf{T}_n(g_n)$ tends to $0$ as $n\to \infty$ with respect to the metric introduced in \eqref{eq:dsig}.
The spectral convergence claimed by \eqref{eq:l1corr} is numerically verified in Figure~\ref{Fig: Second Order correction}. Here, we consider a concrete example of a Toeplitz matrix and observe the convergence rate $\mathcal{O}(n^{-1})$ as predicted by Theorem~\ref{thm:corrected}. Also shown in Figure~\ref{Fig: Second Order correction} is a second-order correction, which will be introduced in the following section.

\subsection{Higher-order corrections}\label{ssec:higher}

The first-order correction presented in the previous section reparametrises the polar curve by the fixed profile $\Delta\Psi/n$. The mechanism is iterative, that is, the corrected curve can itself be corrected. The reason is that the eigenvalue error is controlled by a computable function on the band, namely the residual, which measures the quantisation difference between $g_n$ and
$f_m$. One further evaluation of a reduced phase divides the residual by $n$. Throughout, $\eta$ is as in \Cref{lem:H5}(iii) and $\mathcal{E}$ denotes the odd, $2\pi$-periodic functions that are analytic and bounded on the strip $S_\eta$, with $\|\cdot\|_\eta:=\sup_{S_\eta}|\cdot|$. By \Cref{lem:H5}(iii) we have $\Delta\Psi\in\mathcal{E}$ with $\|\Delta\Psi\|_\eta<\infty$, and shrinking $\eta$ if necessary, $\lambda$ and $\Psi_f$ are analytic and bounded on $S_\eta$ as long as $n > \lVert \tilde \varepsilon\rVert /\eta$.
For a profile $\tilde\varepsilon\in\mathcal{E}$ and $n>\|\tilde\varepsilon\|_\eta/\eta$, so that $\mathrm{id} + \tilde\varepsilon/n$ maps $S_\eta$ into $S_{2\eta}$, the \emph{corrected symbol} is given by
\begin{equation}\label{eq:corrsym}
    g_{\tilde\varepsilon}(e^{\i\beta}):=\lambda\Bigl(\beta+\frac{\tilde\varepsilon(\beta)}{n}\Bigr),
\end{equation}
whereas the remaining phase difference is captured by the \emph{residual} which is defined as
\begin{equation}\label{eq:residualcorrected}
    \rho[\tilde\varepsilon]:=\tilde\varepsilon-\Psi_{g_{\tilde\varepsilon}} +\Psi_f\circ\Bigl(\mathrm{id}+\frac{\tilde\varepsilon}{n}\Bigr).
\end{equation}
In particular, $g_{\tilde{\varepsilon}}$ is real and even so the associated Toeplitz matrix $\mathbf{T}_n(g_{\tilde\varepsilon})$ is real symmetric. By the argument of Lemma~\ref{lem:H5}(i), \Cref{thm:quantSL} applies to $g_{\tilde\varepsilon}$ uniformly on bounded sets of $\mathcal{E}$ and the $k$-th eigenvalue of $\mathbf{T}_n(g_{\tilde\varepsilon})$ is quantised as follows,
\begin{equation}
    \tau_k=\lambda(\sigma_k), \quad \sigma_k=\beta_k+\tilde\varepsilon(\beta_k)/n,
\end{equation}
and
\begin{equation}
    n\beta_k+\Psi_{g_{\tilde\varepsilon}}(\beta_k)=\pi k+o(n^{-q}),
\end{equation}
for every $q>0$, uniformly in $k$ and  in $\tilde\varepsilon \in \mathcal{E}$. Eliminating $n\beta_k$ in the rule of $f_m$ evaluated at $\sigma_k$ gives the exact identity
\begin{align}\label{eq:resder}
  n\sigma_k+\Psi_f(\sigma_k)
  &=\pi k+o(n^{-q}) +\tilde\varepsilon(\beta_k)-\Psi_{g_{\tilde\varepsilon}}(\beta_k) +\Psi_f\bigl(\beta_k+\tfrac{\tilde\varepsilon(\beta_k)}{n}\bigr)\\
  &=\pi k+ o(n^{-q}) + \rho[\tilde\varepsilon](\beta_k).
\end{align}
This yields the following pointwise eigenvalue estimate, which shows that the error is controlled, to leading
order, by the norm of the residual.

\begin{lemma}\label{lem:residual-eps}
Let $\tilde\varepsilon\in\mathcal{E}$ and let $n>\|\Psi_f'\|_\infty$ be large enough for both quantisation rules. Then, for $k=1,\dots,n$, with $\lambda_k$ the eigenvalues of $\mathbf{T}_n(f_m)$,
\begin{equation}\label{eq:resbound}
  \bigl|\lambda_k-\tau_k\bigr|
  \le\frac{\|\lambda'\|_\infty}{n-\|\Psi_f'\|_\infty}
  \Bigl(\bigl\|\rho[\tilde\varepsilon]\bigr\|_{L^\infty(0,\pi)}+Ce^{-\gamma n}+o(n^{-q})\Bigr).
\end{equation}
\end{lemma}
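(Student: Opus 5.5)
The argument mirrors the closing step of the proof of \Cref{thm:corrected}, with the explicit profile $\Delta\Psi$ replaced by a general $\tilde\varepsilon\in\mathcal{E}$ and the residual $\rho[\tilde\varepsilon]$ kept as an exact quantity rather than estimated by Lemma~\ref{lem:H5}(ii),(iii). The inputs are the quantisation rule of \Cref{thm:quant} for $f_m$, written as $n\theta_k+\Psi_f(\theta_k)=\pi k+\mathcal{O}(e^{-\gamma n})$ with $\lambda_k=\lambda(\theta_k)$, and the identity \eqref{eq:resder} for $\sigma_k=\beta_k+\tilde\varepsilon(\beta_k)/n$, where $\tau_k=\lambda(\sigma_k)$. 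Both enumerations are sorted by the same monotone band function $\lambda(\cdot)$ (\Cref{cor:band function}), and each rule has exactly one solution per $k$. Hence the index $k$ in the two rules refers to matched eigenvalues, and $\lambda_k$ and $\tau_k$ are compared through the angles $\theta_k$ and $\sigma_k$.

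Subtracting the rule for $\theta_k$ from \eqref{eq:resder} gives
\begin{equation*}
  n(\sigma_k-\theta_k)+\Psi_f(\sigma_k)-\Psi_f(\theta_k)=\rho[\tilde\varepsilon](\beta_k)+\mathcal{O}(e^{-\gamma n})+o(n^{-q}).
\end{equation*}
The map $\Phi_n:=n\,\mathrm{id}+\Psi_f$ has derivative at least $n-\|\Psi_f'\|_\infty>0$ on $[0,\pi]$. By the mean value theorem, the left-hand side therefore has modulus at least $(n-\|\Psi_f'\|_\infty)|\sigma_k-\theta_k|$. This uses only that $\Psi_f$ is Lipschitz on $[0,\pi]$, which holds by Lemma~\ref{lem:H5}(iii), and it is exactly where the hypothesis $n>\|\Psi_f'\|_\infty$ enters. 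Since $\beta_k\in[0,\pi]$, the right-hand side is bounded by $\|\rho[\tilde\varepsilon]\|_{L^\infty(0,\pi)}+Ce^{-\gamma n}+o(n^{-q})$. Dividing by $n-\|\Psi_f'\|_\infty$ yields the bound on $|\sigma_k-\theta_k|$.

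Finally, $|\lambda_k-\tau_k|=|\lambda(\theta_k)-\lambda(\sigma_k)|\le\|\lambda'\|_\infty|\theta_k-\sigma_k|$, which is \eqref{eq:resbound}. For this estimate, $\|\lambda'\|_\infty$ must be a sup over a set containing both angles. The $\theta_k$ lie in $[0,\pi]$. The $\sigma_k$ also lie in $[0,\pi]$ for large $n$, because $\tilde\varepsilon$ is odd and $2\pi$-periodic and hence vanishes at $0$ and $\pi$, so $\mathrm{id}+\tilde\varepsilon/n$ is an increasing bijection of $[0,\pi]$. In any case $\lambda$ is even and periodic, so the sup over $\R$ equals the sup over $[0,\pi]$.

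The main obstacle is the uniformity of the $o(n^{-q})$ remainder for the $n$-dependent symbol $g_{\tilde\varepsilon}$. I would take it from the remark preceding the lemma, namely the argument of Lemma~\ref{lem:H5}(i), which gives uniform $\mathrm{SL}^\beta$ bounds on bounded subsets of $\mathcal{E}$. I would also check that the branch of $\Psi_{g_{\tilde\varepsilon}}$ is normalised consistently with that of $\Psi_f$ (both vanish at $0$), so that the same integer $k$ labels matched eigenvalues in the two rules.
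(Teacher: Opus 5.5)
Your proposal is correct and follows the paper's proof: subtract the $f_m$ quantisation rule from \eqref{eq:resder}, use the mean value theorem on $\Psi_f$ (equivalently, on $n\,\mathrm{id}+\Psi_f$) to bound $(n-\|\Psi_f'\|_\infty)|\sigma_k-\theta_k|$ by the residual plus the remainders, and then apply the mean value theorem to $\lambda$. Your additional checks are points the paper leaves implicit, and you handle them correctly: that $\sigma_k\in[0,\pi]$ (or that the sup norms over $\R$ and over $[0,\pi]$ coincide), that the $o(n^{-q})$ remainder is uniform, and that the two phases are normalised consistently so the indices match.
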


\begin{proof}
    Subtracting the rule $n\theta_k+\Psi_f(\theta_k)=\pi k+\mathcal{O}(e^{-\gamma n})$ from \eqref{eq:resder} and using the mean value theorem to obtain the bound
    \begin{equation}
        |\Psi_f(\sigma_k)-\Psi_f(\theta_k)|\le\|\Psi_f'\|_\infty|\sigma_k-\theta_k|,    
    \end{equation}
    gives
    \begin{equation}
      (n-\|\Psi_f'\|_\infty)|\sigma_k-\theta_k|\leq\bigl\|\rho[\tilde\varepsilon]\bigr\|_{L^\infty(0,\pi)}+Ce^{-\gamma n}+o(n^{-q}),
    \end{equation}
    and then applying the mean value theorem once again on
    \begin{equation}
        |\lambda_k-\tau_k|=|\lambda(\theta_k)-\lambda(\sigma_k)|\leq\|\lambda'\|_\infty|\sigma_k-\theta_k|    
    \end{equation}
    yields the desired result.
\end{proof}

By Lemma~\ref{lem:residual-eps}, the eigenvalue error is controlled by $\|\rho[\tilde\varepsilon]\|_\infty$. We now show that each iteration decreases
the norm of the residual by one additional order in $1/n$.

\begin{theorem}\label{thm:higher}
Define the profiles at the $K$-th iteration
\begin{equation}\label{eq:picard-eps}
  \varepsilon^{(0)}:=0,\qquad
  \varepsilon^{(K+1)}:=\Psi_{g_{\varepsilon^{(K)}}}-\Psi_f\circ\Bigl(\mathrm{id}+\frac{\varepsilon^{(K)}}{n}\Bigr),
  \qquad K=0,1,2,\dots
\end{equation}
Then there is $R>0$ depending on $f_m$ only and, for every $K\in\N$, constants $n_K,L_K>0$ depending on $f_m$ and $K$ only, such that for $n\ge n_K$ the profiles $\varepsilon^{(0)},\dots,\varepsilon^{(K)}$ are odd, $2\pi$-periodic, analytic and bounded by $R$ on the strip $S_{\eta/2}$, and
\begin{equation}\label{eq:respicard}
 \bigl\|\rho\bigl[\varepsilon^{(K)}\bigr]\bigr\|_{\eta/2}\le\Bigl(\frac{L_K}{n}\Bigr)^{K}\|\Delta\Psi\|_\eta.
\end{equation}
Consequently, if $\tau^{(n,K)}_1\le\dots\le\tau^{(n,K)}_n$ are the eigenvalues of the real symmetric matrix $\mathbf{T}_n(g_{\varepsilon^{(K)}})$, there is $C_K$ depending on $f_m$ and $K$ only such that, enlarging $n_K$ if necessary, for $n\ge n_K$
\begin{equation}\label{eq:higher}
 \bigl|\lambda_k-\tau^{(n,K)}_k\bigr|\le\frac{C_K}{n^{K+1}}\ \text{ uniformly in }k,
 \qquad
 d_\sigma\bigl(\mathbf{T}_n(f_m),\mathbf{T}_n(g_{\varepsilon^{(K)}})\bigr)\le\frac{C_K}{n^{K}}.
\end{equation}
\end{theorem}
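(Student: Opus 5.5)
The plan is to read the iteration \eqref{eq:picard-eps} as a fixed-point scheme $\varepsilon^{(K+1)}=F_n[\varepsilon^{(K)}]$ for the map
\begin{equation*}
  F_n[\tilde\varepsilon]:=\Psi_{g_{\tilde\varepsilon}}-\Psi_f\circ\Bigl(\mathrm{id}+\frac{\tilde\varepsilon}{n}\Bigr).
\end{equation*}
By \eqref{eq:residualcorrected} this map satisfies $\rho[\tilde\varepsilon]=\tilde\varepsilon-F_n[\tilde\varepsilon]$. Along the iteration this gives the identity $\rho[\varepsilon^{(K)}]=\varepsilon^{(K)}-\varepsilon^{(K+1)}$, so the residual is exactly a consecutive difference of iterates. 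The base case is explicit. Since $\varepsilon^{(0)}=0$ and $g_0=g$, we get $\varepsilon^{(1)}=\Psi_g-\Psi_f=\Delta\Psi$. Hence $\|\varepsilon^{(1)}-\varepsilon^{(0)}\|_\eta=\|\Delta\Psi\|_\eta$. The whole theorem then reduces to showing that $F_n$ is a contraction with Lipschitz constant $L/n$ on a ball of $\mathcal{E}$, at the price of shrinking the strip at each step. Telescoping would then give $\|\varepsilon^{(K+1)}-\varepsilon^{(K)}\|\le (L_K/n)^K\|\Delta\Psi\|_\eta$, which is \eqref{eq:respicard}.

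The core step is the Lipschitz estimate for $F_n$. I will prove it on nested strips $S_{\eta_j}$ with $\eta_j:=\eta/2+\eta 2^{-j-1}$, so that all $K$ steps stay inside $S_{\eta/2}$. The estimate to aim for is
\begin{equation*}
  \|F_n[\varepsilon_1]-F_n[\varepsilon_2]\|_{\eta_{j+1}}\le \frac{L_j}{n}\|\varepsilon_1-\varepsilon_2\|_{\eta_j}
\end{equation*}
for $\|\varepsilon_i\|_{\eta_j}\le R$ and $n\ge n_j$. The $\Psi_f$ term is immediate from Lemma~\ref{lem:H5}(iii): $\Psi_f$ is analytic and bounded on $S_{2\eta}$, and $\mathrm{id}+\varepsilon_i/n$ maps $S_{\eta_j}$ into $S_{2\eta}$, so the mean value theorem (or Cauchy's estimate for $\Psi_f'$) gives a bound $\|\Psi_f'\|_{2\eta}\|\varepsilon_1-\varepsilon_2\|/n$. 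For the $\Psi_{g_\varepsilon}$ term I follow the argument of Lemma~\ref{lem:H5}(ii) verbatim, with $g$ replaced by $g_{\varepsilon_2}$ and $g_n$ by $g_{\varepsilon_1}$. The difference $d:=g_{\varepsilon_1}-g_{\varepsilon_2}$ is even, analytic, and bounded by $\|\lambda'\|_{2\eta}\|\varepsilon_1-\varepsilon_2\|/n$. Cauchy's formula for the divided difference of $D$ (where $d=D\circ\cos$) bounds $G^{(1)}_\alpha-G^{(2)}_\alpha$ by the same quantity on a slightly smaller bistrip. The lower bound $\operatorname{Re}\sigma G_\alpha\ge c_1/2$ persists uniformly for $\|\varepsilon_i\|\le R$ and $n$ large. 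Hence the logarithms differ by $O(\|\varepsilon_1-\varepsilon_2\|/n)$, and the coefficient decay and conjugate-series representation from Lemma~\ref{lem:H5}(iii) transfer this to $\eta_{g_{\varepsilon_1}}-\eta_{g_{\varepsilon_2}}$ on the next strip. The same argument shows that $F_n$ maps odd, $2\pi$-periodic profiles to odd ones, since $g_\varepsilon$ stays even.

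Boundedness by $R$ follows from the telescoping estimate: $\|\varepsilon^{(K)}\|\le\|\Delta\Psi\|_\eta\sum_{j}(L_K/n)^j\le 2\|\Delta\Psi\|_\eta$ once $n\ge 2L_K$. So I take $R:=2\|\Delta\Psi\|_\eta$, and a finite induction on $K$ keeps all iterates in the ball where the estimate is valid. The constant $R$ depends only on $f_m$; $n_K$ and $L_K$ depend on $K$ through the strip losses. For the final estimates \eqref{eq:higher}, I apply Lemma~\ref{lem:residual-eps} with $\tilde\varepsilon=\varepsilon^{(K)}$. The quantisation rule for $g_{\varepsilon^{(K)}}$ is uniform on bounded subsets of $\mathcal{E}$ by the argument of Lemma~\ref{lem:H5}(i), and $\|\rho\|_{L^\infty(0,\pi)}\le\|\rho\|_{\eta/2}$. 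Together these give $|\lambda_k-\tau_k^{(n,K)}|\le C n^{-1}\bigl(n^{-K}+e^{-\gamma n}+o(n^{-q})\bigr)\le C_K n^{-K-1}$. Summing this over the $n$ indices gives the bound $C_K n^{-K}$ on $d_\sigma$.

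The main obstacle I expect is the uniform Lipschitz estimate for $\tilde\varepsilon\mapsto\Psi_{g_{\tilde\varepsilon}}$. It needs joint analyticity of $(\theta,\alpha)\mapsto G_\alpha(e^{\i\theta})$ on a bistrip and the non-vanishing lower bound, both uniformly over the ball of radius $R$. The divided-difference step only controls $G$ on a strictly smaller bistrip, which is why each iteration costs strip width and the constants $L_K,n_K$ are allowed to depend on $K$.
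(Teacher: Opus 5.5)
Your proposal is correct and follows the paper's proof essentially step for step. It uses the same map ($F_n=\mathcal{F}_n$) with $\rho[\varepsilon^{(K)}]=\varepsilon^{(K)}-\varepsilon^{(K+1)}$ and $\varepsilon^{(1)}=\Delta\Psi$, the same nested strips $\eta_j=\eta(\tfrac12+2^{-j-1})$ with a Lipschitz constant $L_j/n$ from Lemma~\ref{lem:H5}(ii),(iii), the same $R=2\|\Delta\Psi\|_\eta$, and concludes via Lemma~\ref{lem:residual-eps}. The differences are minor: you bound the iterates by $R$ by summing consecutive differences instead of using the affine estimate $\|\mathcal{F}_n[u]\|_{s'}\le\|\Delta\Psi\|_\eta+\frac{L}{n(s-s')}\|u\|_s$, and you state explicitly that the lower bound on $\operatorname{Re}\sigma G_\alpha$ must hold uniformly over the ball of radius $R$, which the paper leaves implicit.
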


\begin{proof}
Write $\mathcal{F}_n[\tilde\varepsilon]:=\Psi_{g_{\tilde\varepsilon}}-\Psi_f\circ(\mathrm{id}+\tilde\varepsilon/n)$, so that $\rho[\tilde\varepsilon]=\tilde\varepsilon-\mathcal{F}_n[\tilde\varepsilon]$ and the iteration reads $\varepsilon^{(K+1)}=\mathcal{F}_n[\varepsilon^{(K)}]$. For $0<s\le\eta$ let $\mathcal{E}_s$ be the odd, $2\pi$-periodic functions analytic and bounded on $S_s$, normed by $\|\cdot\|_s$ and note $\mathcal{E}_s\subset\mathcal{E}_{s'}$ with $\|\cdot\|_{s'}\le\|\cdot\|_s$ for $s'<s$. Put $R:=2\|\Delta\Psi\|_\eta$ and
\begin{equation}
  \eta_j:=\eta\bigl(\tfrac12+2^{-j-1}\bigr),\qquad j\ge0,
\end{equation}
so $\eta_0=\eta$, $\eta_j\downarrow\eta/2$ and $\eta_j-\eta_{j+1}=\eta2^{-j-2}$.

The map $\mathcal{F}_n$ preserves oddness, but it is not a self-map of a single $\mathcal{E}_s$. The Wiener--Hopf factorisation is bounded only on strictly smaller strips. Let $0<s'<s\le\eta$ and $u_1,u_2\in\mathcal{E}_s$ with $\|u_i\|_s\le R$. For $n\ge R/\eta$ the maps $\mathrm{id}+u_i/n$ send $S_s$ into $S_{2\eta}$, so $g_{u_i}$ is analytic and bounded on $S_s$ with $\|g_{u_1}-g_{u_2}\|_s\le\|\lambda'\|_{2\eta}\|u_1-u_2\|_s/n$. With the stability estimate of Lemma~\ref{lem:H5}(ii) and the Lipschitz continuity of $\Psi_f$ on $S_\eta$ this gives a constant $L$ depending on $f_m$ only, such that
\begin{equation}\label{eq:Flip}
  \bigl\|\mathcal{F}_n[u_1]-\mathcal{F}_n[u_2]\bigr\|_{s'}\le\frac{L}{n(s-s')}\|u_1-u_2\|_s,
  \qquad
  \bigl\|\mathcal{F}_n[u]\bigr\|_{s'}\le\|\Delta\Psi\|_\eta+\frac{L}{n(s-s')}\|u\|_s .
\end{equation}
Write $L_j:=L/(\eta_j-\eta_{j+1})=2^{j+2}L/\eta$ for the constant at the $j$-th scale.

Fix $K$ and let $n\ge n_K:=\max(R/\eta,2L_K)$. Then $L_j/n\le\tfrac12$ for $j\le K$, so the second estimate in \eqref{eq:Flip}, together with $\varepsilon^{(0)}=0$, gives inductively
\begin{equation}  \bigl\|\varepsilon^{(j)}\bigr\|_{\eta_j}\le\|\Delta\Psi\|_\eta+\tfrac12R=R,\qquad j=0,\dots,K .
\end{equation}
In particular each $\varepsilon^{(j)}$, $j\le K$, is bounded by $R$ on $S_{\eta/2}$.

As before define the residual recursively,
\begin{equation}\label{eq: residual identity}
  \rho\bigl[\varepsilon^{(K+1)}\bigr]=\varepsilon^{(K+1)}-\mathcal{F}_n\bigl[\varepsilon^{(K+1)}\bigr]
  =\mathcal{F}_n\bigl[\varepsilon^{(K)}\bigr]-\mathcal{F}_n\bigl[\varepsilon^{(K+1)}\bigr],
\end{equation}
and the increment of the arguments is the previous residual, $\varepsilon^{(K)}-\varepsilon^{(K+1)}=\rho[\varepsilon^{(K)}]$. Both arguments lie in $\mathcal{E}_{\eta_{K+1}}$ with norm at most $R$. Applying the first estimate of \eqref{eq:Flip} at the scale $\eta_{K+1}\to\eta_{K+2}$,
\begin{equation}  \bigl\|\rho\bigl[\varepsilon^{(K+1)}\bigr]\bigr\|_{\eta_{K+2}}\le\frac{L_{K+1}}{n}\bigl\|\rho\bigl[\varepsilon^{(K)}\bigr]\bigr\|_{\eta_{K+1}} .
\end{equation}
Since $\rho[\varepsilon^{(0)}]=-\Delta\Psi$, induction gives $\|\rho[\varepsilon^{(K)}]\|_{\eta_{K+1}}\le n^{-K}\bigl(\prod_{j=1}^{K}L_j\bigr)\|\Delta\Psi\|_\eta$, and \eqref{eq:respicard} follows with $L_K:=\bigl(\prod_{j=1}^{K}L_j\bigr)^{1/K}=\tfrac{4L}{\eta}2^{(K+1)/2}$, because $\eta_{K+1}\ge\eta/2$. Inserting \eqref{eq:respicard} into \eqref{eq:resbound} yields the pointwise bound in \eqref{eq:higher}, summing over $k$ yields the second.
\end{proof}

The cases $K=0,1$ are \Cref{thm:cinf}(i) and \Cref{thm:corrected} respectively. The previous theorem motivates to write,
\begin{equation}
    p_n^{(k)}(e^{\i\alpha})=p(e^{\i(\alpha + \varepsilon^{(K)}/n})
\end{equation}
for the $K$-th order corrected parametrisation of the GBZ, so that $g_{\varepsilon^{(K)}} = f_m\circ p_n^{(K)}$
The results will be numerically illustrated in Figure~\ref{Fig: Second Order correction}.

\begin{figure}[h]
    \centering
    \subfloat[][Spectral convergence up to the second order phase correction.]%
    {\includegraphics[width=0.48\linewidth]{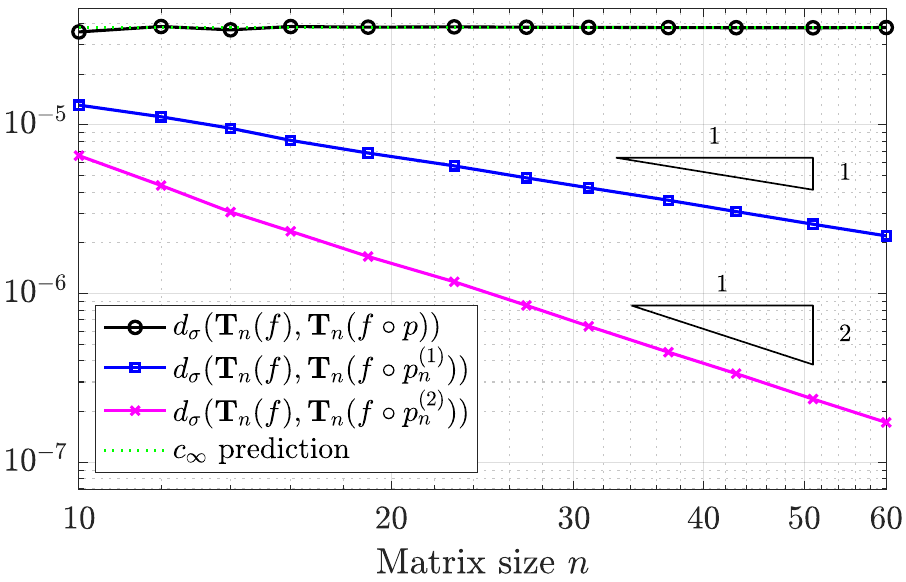}}\quad
    \subfloat[][Spectral convergence up to the second order phase correction.]%
    {\includegraphics[width=0.48\linewidth]{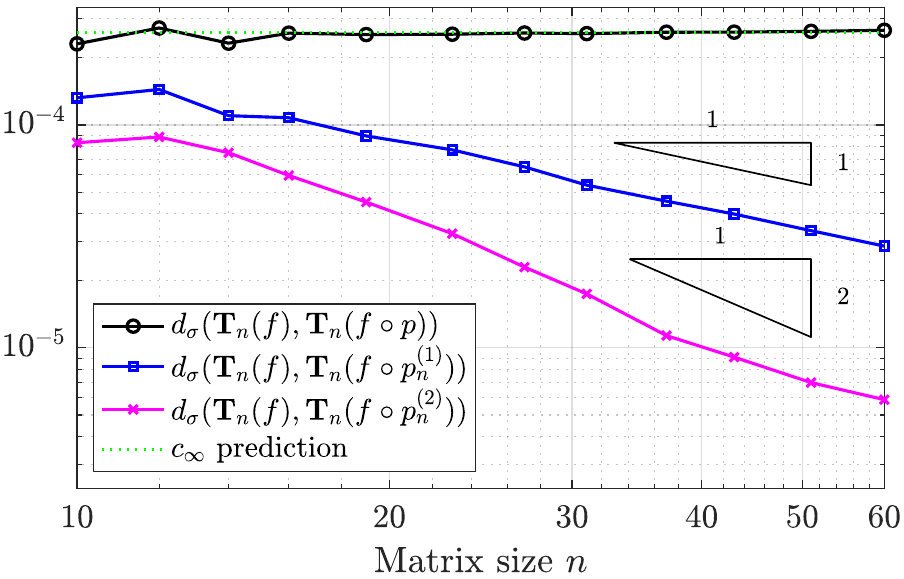}} \\
    \subfloat[][Phase shift captured by the second order expansion.]%
    {\includegraphics[width=0.48\linewidth]{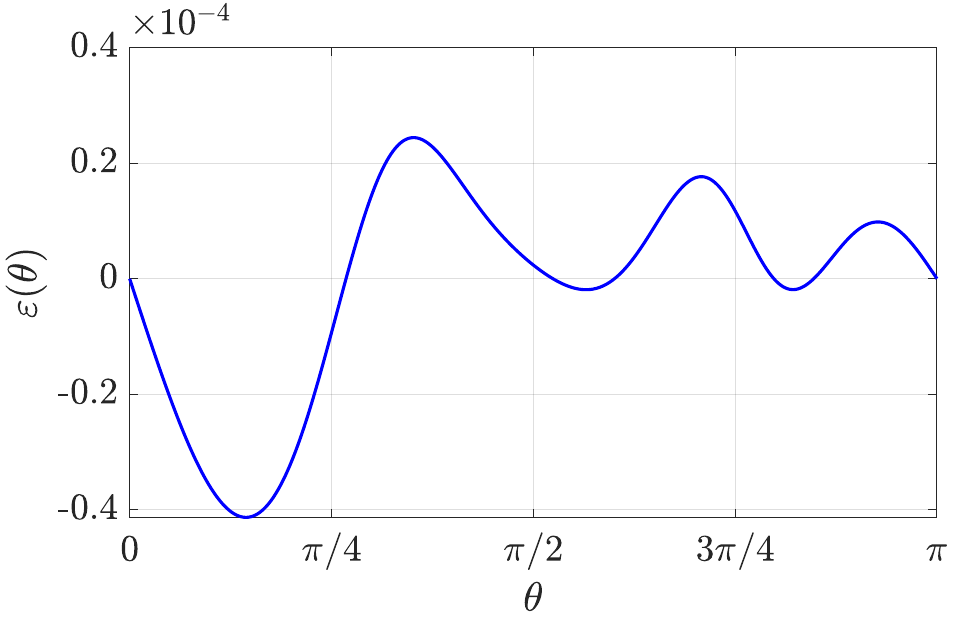}}\quad
    \subfloat[][Phase shift captured by the second order expansion.]%
    {\includegraphics[width=0.48\linewidth]{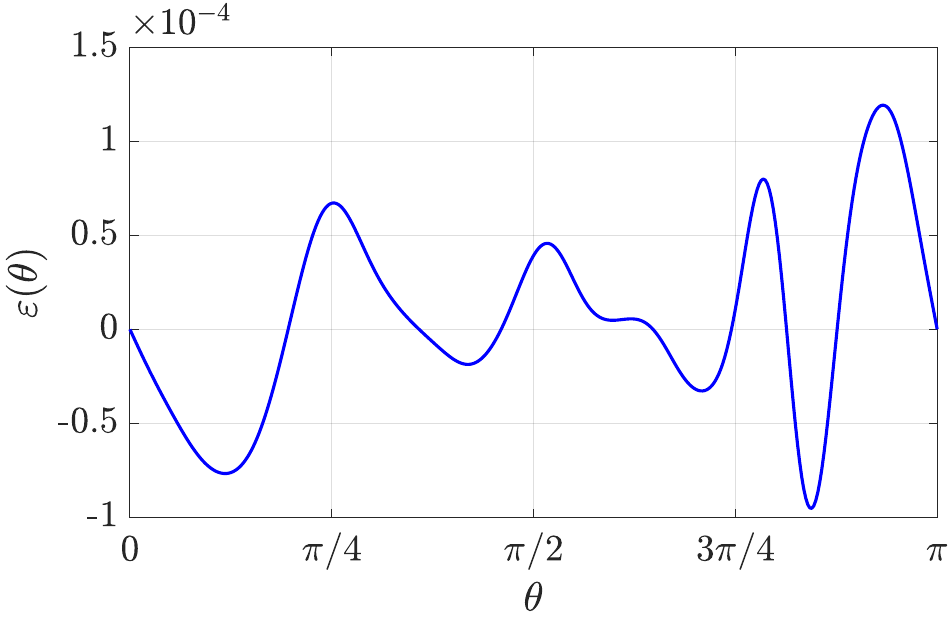}}
    \caption{We consider the spectral convergence of the symmetrisation induced by the parametrisation $p$ of the GBZ, along with the first and second order corrections defined in Section~\ref{sec:firstorder} and Section~\ref{ssec:higher}, respectively. Here, we take a Toeplitz matrix with algebraic off-diagonal decay such that $a_k = |k|^{-\kappa},~ k > 0,$ $a_k = |k|^{-\rho},~k < 0$ and $a_0 = 1$; (A) and (C): computation performed for $m=5, \kappa = 4.5$ and $\rho = 6.8$; (B) and (D): computation performed for $m=7$, with $\kappa = 3.5$ and $\rho = 4.8$.}
    \label{Fig: Second Order correction}
\end{figure}

\section{Non-Hermitian Condensed matter physics}\label{Sec: Physical Applications}
    
The generalised Brillouin zone is defined through the limit $n\to\infty$,  whereas any physical system is necessarily finite. In tridiagonal models \cite{HatanoNelson1996, ammari2024spectra}, that is, for $m = 1$, the GBZ is parametrised by a scaled unit circle, $p(e^{\i\alpha}) =re^{\i\alpha}$ and the transform $\mathbf{T}_n(f_1) \to \mathbf{T}_n(f_1\circ p)$ is an exact similarity for every $r>0$ and $n \in \N$. In effect, the GBZ, although defined through the limit $n\to \infty$, can be used interchangeably for finite systems. For systems with long-range coupling, however, this may no longer hold, as the GBZ is in general no longer just a scaled unit circle, but rather a homotopically deformed one. Banded non-Hermitian Toeplitz matrices arise in applications ranging from subwavelength resonance problems \cite{ammari2023nonhermitianskineffectthreedimensional, debruijn2025spectrapseudospectranonhermitiantoeplitz} to non-local Hamiltonians \cite{WegenerNonlocal, KadicNonlocal}.
The question is therefore in which sense, and at which rate, the finite system converges to this asymptotic description. Our results give the following answer. At a finite matrix level, the transform $\mathbf{T}_n(f_m) \to \mathbf{T}_n(f_m\circ p_n^{(K)})$ is no longer an exact similarity, but as we have seen in Section~\ref{ssec:higher}, the error is controlled and decays like $n^{-K}$ in the matrix size $n$ for any fixed order $K$.
This concept is schematically illustrated in Figure~\ref{fig: Spectral connection}. 

\begin{figure}[h]
    \centering
    \begin{adjustbox}{width=0.62\linewidth} 
    
\begin{tikzpicture}[
    box/.style={draw, align=center, inner sep=7pt, thick, rounded corners=2pt,
                minimum width=4.9cm},
    arr/.style={-{Stealth[length=3mm]}, thick}
]

\node[box, fill=blue!8, draw=blue!50!black] (finf) at (0, 5.0)
    {Finite: $\sigma\bigl(\mathbf{T}_n(f_m)\bigr)$};
\node[box, fill=green!8, draw=green!40!black] (fing) at (9.0, 5.0)
    {Finite: $\sigma\bigl(\mathbf{T}_n(f_m \circ p_n^{(K)})\bigr)$};
\node[box, fill=red!8, draw=red!50!black] (open) at (4.5, 2.4)
    {OBC: $\lim\limits_{n\to\infty}\sigma\bigl(\mathbf{T}_n(f_m)\bigr)
     = \lim\limits_{n\to\infty}\sigma\bigl(\mathbf{T}_n(f_m\circ p)\bigr)$};
\node[box, fill=blue!8, draw=blue!50!black] (perf) at (0, -0.2)
    {PBC: $\sigma\bigl(\mathbf{L}(f_m)\bigr)$};
\node[box, fill=green!8, draw=green!40!black] (perg) at (9.0, -0.2)
    {PBC: $\sigma\bigl(\mathbf{L}(f_m\circ p)\bigr)$};
\draw[arr] (finf) -- (open)
    node[midway, above right=-2pt, font=\small] {$n\to\infty$};
\draw[arr] (fing) -- (open)
    node[midway, above left=-2pt, font=\small] {$n\to\infty$};
\draw[arr] (perf) -- (open)
    node[pos=0.35, below right=-3pt, font=\small] {GBZ: $\mathbb{T}\mapsto p(\mathbb{T})$};
\draw[arr] (perg) -- (open)
    node[pos=0.35, below left=-3pt, font=\small] {BZ: $\mathbb{T}\mapsto\mathbb{T}$};
\draw[{Stealth[length=3mm]}-{Stealth[length=3mm]}, thick, densely dashed]
    (finf) -- (fing)
    node[midway, above, font=\small] {$f_m \longmapsto f_m\circ p_n^{(K)}$}
    node[midway, below, font=\small] {$d_\sigma = \mathcal{O}(n^{-K})$};

    \draw[{Stealth[length=3mm]}-{Stealth[length=3mm]}, thick, densely dashed]
    (perf) -- (perg)
    node[midway, above, font=\small] {$\neq$ in general};
\end{tikzpicture}

    \end{adjustbox}
    \caption{Relating OBC to PBC has been well established, which both describe infinite or semi-infinite systems. However everything an experiment or a computation touches is on the finite level which we relate through the following diagram.}
    \label{fig: Spectral connection}
\end{figure}

The generalised Brillouin zone does not deform as the chain is shortened, as the correction is only tangential. What changes is only the set of momenta at which it is sampled, and the leading displacement is tangential, of order $1/n$, and available in closed form as the boundary phase $\Delta\Psi$. 
Clearly, the parametrisations of the GBZ converge uniformly in the limit as $n\to \infty$, that is, $p_n\to p$.

\section*{Acknowledgments}
The authors thank Michael Floater for insightful discussions, which greatly improved the numerical methods of this work.

\section{Data availability} \label{Sec: Data availability}
The \texttt{Matlab} code for the numerical experiments developed in this work is openly available in the following repository:
\url{https://github.com/yannick2305/ToeplitzSymmetriser}.

\printbibliography

\appendix

\section{Effects of truncating the symmetrised symbol function}\label{Sec: symbol truncation}

We now show that $\mathbf{T}_n(g)$ may be replaced by the $M$-banded matrix $\mathbf{T}_n(g_M)$, where $g_M(z):=\sum_{|k|\le M}b_kz^{-k}$, at a cost that is exponentially small in $M$. Let us define
\begin{equation}\label{eq: eps_M}
    \varepsilon_M:=\sum_{|k|>M}|b_k|\le\frac{2Ce^{-\gamma_F M}}{e^{\gamma_F}-1},
\end{equation}
and order the eigenvalues of $\mathbf{T}_n(g)$ and $\mathbf{T}_n(g_M)$ increasingly, $\lambda_1\le\dots\le\lambda_n$ and $\tilde\lambda_1\le\dots\le\tilde\lambda_n$, with unit eigenvectors $\mathbf{v}_j$ and $\mathbf{\tilde v}_j$.

\begin{theorem}[Banded approximation]\label{thm: banded approximation}
There exist $c_g>0$ and $n_0\in\N$, depending only on $g$, such that for all $n\ge n_0$, all $M\in\N$ and all $1\le j\le n$:
\begin{enumerate}
    \item[(i)] $\bigl|\lambda_j-\tilde\lambda_j\bigr|\le\varepsilon_M$;
    \item[(ii)] if $\varepsilon_M\le c_g/(2(n+1)^2)$, then $\tilde\lambda_j$ is simple and, with the sign of $\tilde v_j$ chosen such that $\langle v_j,\tilde v_j\rangle\ge0$,
    \begin{equation}\label{eq: eigenvector bound}
        \|v_j-\tilde v_j\|_2\le\frac{2\sqrt{2}(n+1)^2}{c_g}\varepsilon_M .
    \end{equation}
\end{enumerate}
In particular, the eigenvalues of the $M$-banded matrix approximate those of $\mathbf{T}_n(g)$ up to $\mathcal{O}(e^{-\gamma_F M})$ uniformly in $n$, and the eigenvectors converge uniformly in $j$ as soon as $M-\frac{2}{\gamma_F}\log n\to\infty$.
\end{theorem}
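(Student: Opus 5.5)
Part (i) follows from Weyl's inequality for Hermitian matrices. Both $\mathbf{T}_n(g)$ and $\mathbf{T}_n(g_M)$ are real symmetric, because $g$ is real and even, so $b_k=b_{-k}\in\R$. Their difference is $\mathbf{T}_n(g-g_M)$, whose symbol has Fourier coefficients supported on $|k|>M$. The estimate $\|\mathbf{T}_n(h)\|_2\le\|h\|_{L^\infty(\mathbb{T})}\le\sum_k|\hat h_k|$ then gives $\|\mathbf{T}_n(g)-\mathbf{T}_n(g_M)\|_2\le\varepsilon_M$. By Weyl's inequality, the increasingly ordered eigenvalues therefore differ by at most $\varepsilon_M$ for every $n$ and $M$. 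The explicit bound on $\varepsilon_M$ in \eqref{eq: eps_M} is the geometric sum of \eqref{eq: fourier coefficients of g}.

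For (ii) I would use a Davis--Kahan $\sin\Theta$ argument, and the key input is a lower bound on the spectral gaps of $\mathbf{T}_n(g)$. I will show that there is $c_g>0$ with
\begin{equation*}
\min_j|\lambda_{j+1}-\lambda_j|\ge \frac{c_g}{(n+1)^2}
\end{equation*}
for $n\ge n_0$. This comes from \Cref{thm:quantSL}. The eigenvalues are $\lambda(\alpha_j)$, where $n\alpha_j+\Psi_g(\alpha_j)=\pi j+o(n^{-q})$. Since $\Psi_g$ is Lipschitz by \Cref{lem:H5}(iii), consecutive nodes satisfy $\alpha_{j+1}-\alpha_j\ge \pi/(n+\|\Psi_g'\|_\infty)-o(n^{-q})$. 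Moreover, $\alpha_1$ and $\pi-\alpha_n$ are bounded below by a constant times $1/n$. By \Cref{cor:band function}, $|\lambda'(\alpha)|\ge c|\sin\alpha|$ on $[0,\pi]$ and $\lambda''(0),\lambda''(\pi)\neq0$. The mean value theorem then gives $|\lambda(\alpha_{j+1})-\lambda(\alpha_j)|\ge c\,\min_{[\alpha_j,\alpha_{j+1}]}|\sin|\cdot(\alpha_{j+1}-\alpha_j)\gtrsim n^{-2}$. The worst case is at the band edges, where $\sin\alpha\sim1/n$; in the interior the gap is of order $n^{-1}$. Writing the bound in terms of $(n+1)^2$ absorbs the constants.

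Given the gap bound, assume $\varepsilon_M\le c_g/(2(n+1)^2)$. Part (i) shows that each $\tilde\lambda_j$ stays within $\varepsilon_M$ of $\lambda_j$, so the $\tilde\lambda_j$ remain separated by at least $c_g/(n+1)^2-2\varepsilon_M\ge0$. A slightly sharper count, keeping strict inequality, gives simplicity. The distance from $\tilde\lambda_j$ to the rest of $\sigma(\mathbf{T}_n(g))$ is then at least $\delta:=c_g/(n+1)^2-\varepsilon_M\ge c_g/(2(n+1)^2)$. Davis--Kahan gives $\sin\angle(v_j,\tilde v_j)\le\varepsilon_M/\delta$. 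With the sign convention $\langle v_j,\tilde v_j\rangle\ge0$ we have $\|v_j-\tilde v_j\|_2\le\sqrt2\sin\angle$, and this yields \eqref{eq: eigenvector bound} with the constant $2\sqrt2(n+1)^2/c_g$.

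The final sentence of the theorem follows directly. Part (i) gives the uniform $\mathcal{O}(e^{-\gamma_F M})$ bound, and the right-hand side of \eqref{eq: eigenvector bound} is of order $n^2e^{-\gamma_F M}=e^{-\gamma_F(M-\frac{2}{\gamma_F}\log n)}$, which tends to $0$. The main obstacle is the uniform edge gap estimate. It requires the quantisation remainder to be negligible against $n^{-1}$ uniformly in $j$, which \Cref{thm:quantSL} provides, together with a quantitative lower bound on $\alpha_1$ and $\pi-\alpha_n$. Both follow from $\Psi_g(0)=0$, $\Psi_g(\pi)=\pi$ and the monotonicity of $n\,\mathrm{id}+\Psi_g$ for large $n$.
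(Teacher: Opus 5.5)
Your proposal is correct and follows the paper's proof. Part (i) is Weyl's inequality with $\|\mathbf{T}_n(g-g_M)\|_2\le\varepsilon_M$. Part (ii) combines a lower bound $c_g(n+1)^{-2}$ on the eigenvalue gaps of $\mathbf{T}_n(g)$ with a $\sin\Theta$ residual estimate, which gives the same constant $2\sqrt2(n+1)^2/c_g$.

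The differences are cosmetic. You expand $\tilde v_j$ in the eigenbasis of $\mathbf{T}_n(g)$, whereas the paper expands $v_j$ in that of $\mathbf{T}_n(g_M)$. Your derivation of the gap bound from \Cref{thm:quantSL}, using $\Psi_g(0)=0$, $\Psi_g(\pi)=\pi$, Lipschitz continuity of $\Psi_g$ and $|\lambda'(\alpha)|\ge c|\sin\alpha|$, is more careful than the paper's one-line appeal to an eigenvalue-spacing expansion.
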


\begin{proof}

Since $g$ is real-valued on $\mathbb{T}$ and $p$ is symmetric under $\theta\mapsto-\theta$, the coefficients are real and $b_{-k}=b_k$, so that $\mathbf{T}_n(g)$ is real symmetric. Writing $g(\theta):=g(e^{\i\theta})$, Assumption~\ref{ass:standing} together with Lemma~\ref{lem: separation implies non confluence} implies that $g$ is simple-loop. It is strictly monotone on $[0,\pi]$ with $g'\neq0$ on $(0,\pi)$ and $g''(0)\neq0\neq g''(\pi)$.

We start by proving the eigenvalue estimate. The difference $\mathbf{E}:=\mathbf{T}_n(g)-\mathbf{T}_n(g_M)=\mathbf{T}_n(g-g_M)$ is real symmetric. For any $h\in L^\infty(\mathbb{T})$ one has $\|\mathbf{T}_n(h)\|_2\le\|h\|_{L^\infty(\mathbb{T})}$, since $\mathbf{T}_n(h)=\mathbf{P}_n\mathbf{L}(h)\mathbf{P}_n$ is a compression of the Laurent operator $\mathbf{L}(h)$ on $\ell^2(\Z)$, which is unitarily equivalent to multiplication by $h$ on $L^2(\mathbb{T})$. Hence
\begin{equation}\label{eq: E bound}
    \|\mathbf{E}\|_2\le\|g-g_M\|_{L^\infty(\mathbb{T})}\le\sum_{|k|>M}|b_k|=\varepsilon_M .
\end{equation}
The pointwise eigenvalue bound established in (i) is therefore a direct consequence of Weyl's inequality for symmetric matrices.

To show the eigenvector estimate given in (ii) we proceed as follows. Following \cite{BOGOYA_SLM} the eigenvalues are sampled from the symbol function, so

\begin{equation}
    \lambda_{j+1} - \lambda_j = C_1 \frac{g'(\theta)}{n} + C_2 \frac{g''(\theta)}{n^2} + \mathcal{O}(n^{-3}) \geq Cn^{-2}.
\end{equation}
Let us define the minimal spectral distance as follows,
\begin{equation}\label{eq: gap}
    \delta_n(g):=\min_{i\neq j}|\lambda_i-\lambda_j|\geq c_g(n+1)^{-2}.
\end{equation}

By assumption of the theorem $\varepsilon_M\le c_g/(2(n+1)^2)\le\delta_n(g)/2$ and by the first part of the proof 
\begin{equation}
    |\tilde\lambda_i-\lambda_j|\ge|\lambda_i-\lambda_j|-\varepsilon_M\ge\delta_n(g)/2, \quad i \neq j.
\end{equation}
In particular the $\tilde\lambda_i$ are pairwise distinct, so each $\tilde v_j$ is determined up to sign. Expand $v_j=\sum_{i}c_i\tilde v_i$ with $\sum_i c_i^2=1$, choosing the sign of $\tilde v_j$ such that $c_j=\langle v_j,\tilde v_j\rangle\ge0$. Using $\mathbf{T}_n(g)v_j=\lambda_jv_j$ and \eqref{eq: E bound},
\begin{equation}
    \varepsilon_M^2\ge\|\mathbf{E}v_j\|_2^2=\|(\mathbf{T}_n(g_M)-\lambda_j)v_j\|_2^2
    =\sum_{i}(\tilde\lambda_i-\lambda_j)^2c_i^2
    \ge\frac{\delta_n(g)^2}{4}\sum_{i\neq j}c_i^2
    =\frac{\delta_n(g)^2}{4}(1-c_j^2).
\end{equation}
Since $c_j\ge0$, the difference of the two unit vectors is controlled by
\begin{equation}
    \|v_j-\tilde v_j\|_2^2=2(1-c_j)\le2(1-c_j)(1+c_j)=2(1-c_j^2).
\end{equation}
Combining the last two displays with \eqref{eq: gap} yields
\begin{equation}
    \|v_j-\tilde v_j\|_2^2\le\frac{8\varepsilon_M^2}{\delta_n(g)^2}\le\frac{8(n+1)^4}{c_g^2}\varepsilon_M^2,
\end{equation}
and \eqref{eq: eigenvector bound} follows by taking square roots. The final assertion follows by inserting \eqref{eq: eps_M} into \eqref{eq: eigenvector bound}.
\end{proof}

Consequently, all eigenvalue statements of Section~\ref{sec:sharp} remain valid for the truncated symbol $g_M$ up to the additive error $\varepsilon_M$ of Theorem~\ref{thm: banded approximation} (i) uniformly in the matrix size $n$.
The error in the eigenvectors introduced by the truncation is controlled by Theorem~\ref{thm: banded approximation} (ii) and grows quadratically in $n$.

\end{document}